\begin{document}
\addtolength{\parskip}{8pt}

\newcommand\C{{\mathbb C}}

\newtheorem{thm}{Theorem}[section]
\newtheorem{prop}[thm]{Proposition}
\newtheorem{cor}[thm]{Corollary}
\newtheorem{lem}[thm]{Lemma}
\newtheorem{lemma}[thm]{Lemma}
\newtheorem{exams}[thm]{Examples}
\theoremstyle{definition}
\newtheorem{defn}[thm]{Definition}
\newtheorem{rem}[thm]{Remark}
\newcommand\RR{\mathbb{R}}
\newcommand{\la}{\lambda}
\def\RN {\mathbb{R}^n}
\newcommand{\norm}[1]{\left\Vert#1\right\Vert}
\newcommand{\abs}[1]{\left\vert#1\right\vert}
\newcommand{\set}[1]{\left\{#1\right\}}
\newcommand{\Real}{\mathbb{R}}
\newcommand{\supp}{\operatorname{supp}}
\newcommand{\card}{\operatorname{card}}
\renewcommand{\L}{\mathcal{L}}
\renewcommand{\P}{\mathcal{P}}
\newcommand{\T}{\mathcal{T}}
\newcommand{\A}{\mathbb{A}}
\newcommand{\K}{\mathcal{K}}
\renewcommand{\S}{\mathcal{S}}
\newcommand{\blue}[1]{\textcolor{blue}{#1}}
\newcommand{\red}[1]{\textcolor{red}{#1}}
\newcommand{\Id}{\operatorname{I}}
\newcommand\wrt{\,{\rm d}}
\def\SH{\sqrt {H}}

\newcommand{\rn}{\mathbb R^n}
\newcommand{\de}{\delta}
\newcommand{\tf}{\tfrac}
\newcommand{\ep}{\epsilon}
\newcommand{\vp}{\varphi}

\newcommand{\mar}[1]{{\marginpar{\sffamily{\scriptsize
        #1}}}}
\newcommand{\li}[1]{{\mar{LY:#1}}}
\newcommand{\el}[1]{{\mar{EM:#1}}}
\newcommand{\as}[1]{{\mar{AS:#1}}}
\newcommand\CC{\mathbb{C}}
\newcommand\NN{\mathbb{N}}
\newcommand\ZZ{\mathbb{Z}}
\renewcommand\Re{\operatorname{Re}}
\renewcommand\Im{\operatorname{Im}}
\newcommand{\mc}{\mathcal}
\newcommand\D{\mathcal{D}}
\newcommand{\al}{\alpha}
\newcommand{\nf}{\infty}
\newcommand{\comment}[1]{\vskip.3cm
	\fbox{%
		\color{red}
		\parbox{0.93\linewidth}{\footnotesize #1}}
	\vskip.3cm}

\newcommand{\disappear}[1]

\numberwithin{equation}{section}
\newcommand{\chg}[1]{{\color{red}{#1}}}
\newcommand{\note}[1]{{\color{green}{#1}}}
\newcommand{\later}[1]{{\color{blue}{#1}}}
\newcommand{\bchi}{ {\chi}}

\numberwithin{equation}{section}
\newcommand\relphantom[1]{\mathrel{\phantom{#1}}}
\newcommand\ve{\varepsilon}  \newcommand\tve{t_{\varepsilon}}
\newcommand\vf{\varphi}      \newcommand\yvf{y_{\varphi}}
\newcommand\bfE{\mathbf{E}}

\title[ A sharp regularity estimate for the Schr\"odinger  propagator]
{A sharp regularity estimate for the Schr\"odinger\\[2pt] propagator on the  sphere
}

 \author{Xianghong Chen}
  \author{Xuan Thinh Duong}
   \author{Sanghyuk Lee}
 \author{Lixin Yan}
 \address{Xianghong Chen, Department of Mathematics, Sun Yat-sen
 University, Guangzhou, 510275, P.R. China}
 \email{chenxiangh@mail.sysu.edu.cn}
  \address{Xuan Thinh Duong, Department of Mathematics, Macquarie University, NSW 2109, Australia}
\email{xuan.duong@mq.edu.au}
 \address{Sanghyuek Lee,  School of Mathematical Sciences, Seoul National University, Seoul 151-742, Repulic of Korea}
\email{shklee@snu.ac.kr}
 \address{Lixin Yan, Department of Mathematics, Sun Yat-sen   University,
 Guangzhou, 510275, P.R. China}
 \email{mcsylx@mail.sysu.edu.cn}

\date{\today}
\subjclass[2000]{35J0, 35B45, 42B37.}
\keywords{Schr\"odinger equation;  space-time estimate;
   spherical harmonic expansion; maximal function; zonal function.}

\begin{abstract}
Let $\Delta_{\mathbb S^n}$ denote  the Laplace-Beltrami operator on  the $n$-dimensional unit sphere $\mathbb S^n$.
In this paper we show that
$$
\| e^{it  \Delta_{\mathbb S^n}}f \|_{L^4([0, 2\pi) \times \mathbb S^n)}
\leq C \| f\|_{W^{\alpha, 4} (\mathbb S^n)}
$$
holds 
provided that 
$n\geq 2$, $\alpha> {(n-2)/4}.$   
The range of $\alpha$ is sharp up to the endpoint.  
As a consequence, we  obtain space-time estimates for the Schr\"odinger  propagator  $e^{it \Delta_{\mathbb S^n}}$
on the $L^p$ spaces
for  $2\leq p\leq \infty.$
We also prove that  for
zonal functions
on ${\mathbb S}^n$, the Schr\"odinger maximal operator $\sup_{0\leq t<2\pi} |e^{it\Delta_{\mathbb S^n}} f|$
is bounded from $W^{\alpha, 2}(\mathbb S^n) $ to $L^{\frac{6n}{3n-2}}(\mathbb S^n)$
whenever
$\alpha>{1/ 3}$.
 \end{abstract}

\maketitle


\section{Introduction}
 \setcounter{equation}{0}

 \noindent
 Let $\mathbb S^n$ denote the $n$-dimensional unit sphere
  in ${\mathbb R}^{n+1}  $ endowed with the  standard metric. Denote by  $ \Delta_{\mathbb S^n}$
  the Laplace-Beltrami operator   on
$\mathbb S^n$.
For $k=0, 1, \cdots,$
denote by ${\mathscr H}^n_k$ the space of spherical harmonics of degree $k$
(for background on the spherical harmonics, cf. \cite[Chapter IV]{SteinWeiss}). It is well-known that
one has  
the orthogonal decomposition
$$
L^2(\mathbb S^n)=\bigoplus_{k=0}^{\infty}{\mathscr H}^n_k;
$$
  moreover,
  $$
   \Delta_{\mathbb S^n} Y_k =-k (k+n-1)Y_k, \ \  \ \forall Y_k\in {\mathscr H}^n_k,
  $$
 and  ${\mathscr H}^n_k$ is of dimension
 $\sim k^{n-1}.$
  Denote by
  $$
 {\mathbb P}^n_k: L^2(\mathbb S^n)\to {\mathscr H}^n_k
  $$
  the orthogonal projection from $L^2(\mathbb S^n) $ to $ {\mathscr H}^n_k$.

In this paper we  study   regularity properties of solutions to the Cauchy  problem for  the Schr\"odinger equation
 on   ${\mathbb S^n}$:
\begin{eqnarray}\label{e1.1}
  i{\partial_t u } + \Delta_{\mathbb S^n} u=0,  \ \ \ \ \ \
 u(0, x)=f(x),
\end{eqnarray}
where the unknown $u(t, x)$ is a complex-valued function on
$[0, 2\pi) \times\mathbb S^n.$ %
For convenience, we willl write $\mathbb T=[0, 2\pi)$.  
    By spectral theory,
the solution operator
for  
equation \eqref{e1.1}
is  
given by
 \begin{eqnarray}\label{e1.2}
e^{it \Delta_{\mathbb S^n}}f:=\sum_{k=0}^\infty e^{-itk(k+n-1)} {\mathbb P}^n_k (f).
\end{eqnarray}
By the Parseval identity, the Schr\"odinger operator
  $e^{it \Delta_{\mathbb S^n}}$  acts
  isometrically  
  on   $L^2(\mathbb S^n)$.
For $2\leq  p<\infty$, it follows   by the Sobolev embedding
  that,
  with 
  $ s(p, n)=n\big({1/2}- {1/p}\big)$,  
  \begin{eqnarray}
\label{e1.3}
\big\| e^{it \Delta_{\mathbb S^n}}f \big\|_{L^p( \mathbb S^n)}
   \leq    C\|   e^{it \Delta_{\mathbb S^n}}f \|_{ W^{s(p, n), 2}(\mathbb S^n)  }
 &= & C\|   f \|_{ W^{s(p, n), 2}(\mathbb S^n)  }
  \nonumber\\
  &\leq &  C'\|   f \|_{ W^{s(p, n), p}(\mathbb S^n)  },
\end{eqnarray}
where  the Sobolev space  $W^{s, p} $  is
defined 
by
$$W^{s, p}(\mathbb S^n)=\{f:  \|(I-\Delta_{\mathbb S^n} )^{\frac s2} f\|_{p}<\infty \}.$$ %
 Taking the $L^p(\mathbb T)$ norm
 then gives  
  \begin{align}
\label{e1.4}
\big\| e^{it \Delta_{\mathbb S^n}}f \big\|_{L^p(\mathbb T \times \mathbb S^n)}\leq
C\|   f \|_{ W^{s(p, n), p}(\mathbb S^n).}  
\end{align}
Note that  
estimate \eqref{e1.4} does not
take into account possible gain of  
provided by the average  
over  
$\mathbb T$.
 
 In contrast to  the fixed time estimate \eqref{e1.3},
 it
 is of interest  
  to  seek  the minimal   $\alpha$ for which the bound
  \begin{align}
\label{e1.5}
\big\| e^{it \Delta_{\mathbb S^n}}f \big\|_{L^p(\mathbb T \times \mathbb S^n)}\leq
C\|   f \|_{ W^{\alpha, p}(\mathbb S^n)  }
\end{align}
holds.  
On  the
circle  
$\mathbb S^1$,    it is
known  
that
\eqref {e1.5}  holds for $\alpha=0$
 when  
 $2\leq p\leq 4$,
 for  
 $\alpha>0$
 when  
 $4<p\leq 6$,  
 and
 for  
 $\alpha>{1/2}-{3/p}$
 when  
 $6<p<\infty$,
 by  
 a bound of Zygmund \cite{Z}:  
 \begin{align}
 \label{e1.1z}
\big\| e^{it \Delta_{\mathbb S^1}}f \big\|_{L^4(\mathbb T \times \mathbb S^1)} \leq
 C\|   f \|_{ L^{2}(\mathbb S^1)  },
\end{align}
and  the following well-known inequality due to  Bourgain \cite{Bourgain1993}: 
 \begin{align}
\label{e1.b3}
\big\| e^{it \Delta_{\mathbb S^1}}f \big\|_{L^6(\mathbb T \times \mathbb S^1)}\leq C
 \|   f \|_{ W^{\varepsilon, 2}(\mathbb S^1)  }, \ \ \ \ \forall \varepsilon>0.
\end{align}
For the sphere $\mathbb S^n, n\geq 2$,
    it is remarkable that  in \cite[Theorem 4]{BGT2004},  Burq-G\'erard-Tzvetkov     used  the clustering property
	of the spectrum of the Laplace-Beltrami operator
	and $L^2$-$L^4$ norm of spectral projections of the Laplace associated to finite intervals of high frequencies
	 to establish  the following
  Strichartz estimates:
\begin{align}
\label{e1.6}
\big\| e^{it \Delta_{\mathbb S^n}}f \big\|_{L^4(\mathbb T \times \mathbb S^n)}
\leq C  \|  f \|_{ W^{\alpha, 2}(\mathbb S^n)  }, \ \ \ \  \alpha>\alpha(4, n),
\end{align}
where $\alpha(4, n)$ is given by
\begin{eqnarray}\label{e1.7}
\alpha(4, n)=
\left\{
\begin{array}{llll}
{1\over 8}, \ \ \ &{\rm if}\ \ \  &n=2;\\[6pt]
  {n-2\over 4},\ \ \ &{\rm if}\ \ \  &n\geq 3.
\end{array}
\right.
\end{eqnarray}
The loss of $\alpha$  derivatives in the  estimate \eqref{e1.6} is essentially
sharp in the sense that similar estimates fail with $\alpha\leq \alpha(4, n)$  if $n\geq 3$ (resp. $\alpha<\alpha(4, 2)$ if $n=2$).
From \eqref{e1.6}, one infers
 that  for  $p=4$, estimate \eqref {e1.5}  averaging over time $\mathbb T $ yields a gain
${3/8} $ derivatives for $n=2$;    and a gain
${1/2}$ derivatives for $n\geq 3.$

Our first  goal in this paper  is  to prove
an  
$L^4$-estimate     
  with a loss of  
  $\varepsilon>0$   derivative  on  
  $\mathbb S^2$;
 and
 a loss of  
 of ${(n-2)/4} $ derivatives on  
 $\mathbb S^n$,  
 $ n\geq 3.$
  More precisely, we have the following result.

\begin{thm}
\label{Thm1.1}\   Let $n\geq 2.$ The solution $e^{it  \Delta_{\mathbb S^n}}f$ of \eqref{e1.1} satisfies
 \begin{eqnarray}\label{e1.8}
 \| e^{it  \Delta_{\mathbb S^n}}f \|_{L^4(\mathbb T \times \mathbb S^n)}
 \leq C \| f\|_{W^{\alpha, 4} (\mathbb S^n)}, \ \ \ \ \ \  \alpha> {n-2\over 4}.
  \end{eqnarray}
 Moreover, \eqref{e1.8}
 fails  
 when $\alpha < {(n-2)/4}$.
 \end{thm}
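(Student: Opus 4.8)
The plan is to prove the $L^4$ bound \eqref{e1.8} by expanding the propagator in spherical harmonics, reducing to a bound on a sum over frequency blocks, and then exploiting the well-understood $L^2$--$L^4$ bounds for spectral projections together with the number-theoretic structure of the eigenvalues $k(k+n-1)$. First I would decompose $f = \sum_k {\mathbb P}^n_k f$ and write $e^{it\Delta_{\mathbb S^n}} f = \sum_k e^{-itk(k+n-1)} {\mathbb P}^n_k f$. Raising to the fourth power and integrating in $t$ over $\mathbb T$ produces a condition that $k_1(k_1+n-1) + k_2(k_2+n-1) = k_3(k_3+n-1) + k_4(k_4+n-1)$, i.e.\ we are led to estimate $\big\| \sum_k e^{-itk(k+n-1)} {\mathbb P}^n_k f\big\|_{L^4(\mathbb T\times\mathbb S^n)}^2 = \big\|\sum_{k,l} e^{-it(k(k+n-1)+l(l+n-1))} ({\mathbb P}^n_k f)({\mathbb P}^n_l f)\big\|_{L^2}$. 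Grouping the $(k,l)$ pairs according to the value $m = k(k+n-1) + l(l+n-1)$ and using orthogonality in $t$ reduces matters, via Cauchy--Schwarz in the number of representations, to summing $\big\| \sum_{(k,l): k(k+n-1)+l(l+n-1)=m} ({\mathbb P}^n_k f)({\mathbb P}^n_l f)\big\|_{L^2(\mathbb S^n)}^2$ over $m$, with a loss governed by $\max_m r(m)$, where $r(m)$ counts such representations.

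The next step is to control, for a single frequency pair, the pointwise product $({\mathbb P}^n_k f)({\mathbb P}^n_l f)$ in $L^2(\mathbb S^n)$. I would use the classical $L^2$--$L^4$ spectral projection bound on $\mathbb S^n$, namely $\|{\mathbb P}^n_k f\|_{L^4(\mathbb S^n)} \lesssim k^{\sigma(n)} \|{\mathbb P}^n_k f\|_{L^2(\mathbb S^n)}$ with $\sigma(n) = (n-2)/4$ for $n \ge 3$ (and $\sigma(2) = 1/8$, with the extra $\varepsilon$ entering here on $\mathbb S^2$), which follows from Sogge's estimates or the explicit kernel of ${\mathbb P}^n_k$ as a Gegenbauer polynomial; together with Hölder this bounds $\|({\mathbb P}^n_k f)({\mathbb P}^n_l f)\|_{L^2} \le \|{\mathbb P}^n_k f\|_{L^4}\|{\mathbb P}^n_l f\|_{L^4} \lesssim (kl)^{\sigma(n)}\|{\mathbb P}^n_k f\|_{L^2}\|{\mathbb P}^n_l f\|_{L^2}$. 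Because the eigenvalues $k(k+n-1)$ are, after the shift $k \mapsto (2k+n-1)$, essentially a quadratic (a dilate of squares), the representation function $r(m)$ is essentially the divisor-type function for sums of two squares, so $\max_m r(m) = O(m^{\varepsilon})$; since $m \sim k^2$ this contributes only an $\varepsilon$-loss in $\alpha$. Carefully bookkeeping the weights and applying Cauchy--Schwarz twice (once in $m$, once within each block over the $O(m^\varepsilon)$ pairs) then yields $\|e^{it\Delta_{\mathbb S^n}} f\|_{L^4(\mathbb T\times\mathbb S^n)} \lesssim \|f\|_{W^{\sigma(n)+\varepsilon,\,2}}$; but this is an $L^2$-based estimate and still loses more than we want for $n=2$, so the improvement to an $L^4$-based Sobolev norm requires a further idea.

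To obtain the $L^4$-based estimate $\|f\|_{W^{\alpha,4}}$ with $\alpha > (n-2)/4$ rather than $\|f\|_{W^{\alpha,2}}$ — which is strictly stronger since $L^4(\mathbb S^n) \subset L^2(\mathbb S^n)$ — I expect one must avoid the lossy Cauchy--Schwarz in the spherical variable and instead argue more directly. The natural route is to write the solution's kernel: $e^{it\Delta_{\mathbb S^n}} f(x) = \int_{\mathbb S^n} K_t(x\cdot y) f(y)\, d\sigma(y)$, where $K_t(\cos\theta) = \sum_k e^{-itk(k+n-1)} Z_k(\cos\theta)$ and $Z_k$ is the zonal projection kernel. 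One then estimates $\|K_t\|$ in an appropriate mixed norm (in $t$ and in the zonal variable), and uses the fact that convolution on $\mathbb S^n$ against a zonal kernel is controlled on $L^4$ by the $L^{4/3}$-type norm of the kernel, via a Young/Stein--Weiss inequality adapted to the sphere. The sharpness claim — that \eqref{e1.8} fails for $\alpha < (n-2)/4$ — I would prove by testing on a single zonal spherical harmonic $f = Z_k$ (or a suitably normalized one), for which $\|Z_k\|_{L^4(\mathbb S^n)} \sim k^{(n-2)/4} \|Z_k\|_{L^2}$ and the time-averaged $L^4$ norm of $e^{it\Delta_{\mathbb S^n}} Z_k$ stays comparable to $\|Z_k\|_{L^2}$ up to constants, forcing $\alpha \ge (n-2)/4$; this is the same example that shows the projection bound $\sigma(n) = (n-2)/4$ is sharp. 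The main obstacle is the upgrade from an $L^2$-Sobolev right-hand side to an $L^4$-Sobolev right-hand side: handling the frequency interactions without passing through $L^2$ orthogonality on $\mathbb S^n$ forces a genuinely different, kernel-based argument, and making the number-theoretic count of representations interact cleanly with the $L^4$ geometry of zonal kernels is the delicate point.
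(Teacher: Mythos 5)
Your positive-direction argument does not reach the actual content of the theorem. The steps you carry out (Plancherel in $t$, the divisor-type bound on the number of representations of $k(k+n-1)+\ell(\ell+n-1)$, Cauchy--Schwarz, and Sogge's $L^2\to L^4$ projection bound) only reproduce the Burq--G\'erard--Tzvetkov estimate \eqref{e1.6}, i.e.\ an $L^2$-based bound with loss $\tfrac18+\varepsilon$ when $n=2$. Note also that your comparison of the two norms is backwards: since $\|f\|_{W^{\alpha,2}(\mathbb S^n)}\lesssim\|f\|_{W^{\alpha,4}(\mathbb S^n)}$ on the compact sphere, the $W^{\alpha,2}$-based bound \emph{implies} the $W^{\alpha,4}$-based bound for the same $\alpha$; hence for $n\ge3$ the theorem follows at once from \eqref{e1.6}--\eqref{e1.7}, and the entire new content is the case $n=2$ with $\alpha>0$, which your $1/8+\varepsilon$ loss does not give. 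For precisely that case your proposal has no proof: the kernel/Young-type upgrade you sketch is not carried out, and an $L^{4/3}$-type bound on the full propagator kernel is far too lossy (and blind to the $L^2$- versus $L^4$-based distinction in the data) to yield an $\varepsilon$-loss estimate. The paper's mechanism is different: its Lemma~\ref{le2.1} runs your number-theoretic step but \emph{pointwise in $x$}, bounding $\|e^{it\Delta_{\mathbb S^2}}f\|_{L^4_t}$ by the square function $\big(\sum_k|(1-\Delta_{\mathbb S^2})^{\alpha/2}{\mathbb P}^2_kf|^2\big)^{1/2}$; Kadec's $\tfrac14$-theorem (Lemma~\ref{le2.2}) shows $\{e^{it\sqrt{k(k+1)}}\}$ is a Riesz sequence in $L^2(\mathbb T)$, so this square function is comparable to $\|e^{it\sqrt{-\Delta_{\mathbb S^2}}}(1-\Delta_{\mathbb S^2})^{\alpha/2}f\|_{L^2_t}$; and then the Mockenhaupt--Seeger--Sogge local smoothing estimate $L^4_x\to L^4_xL^2_t$ for the half-wave group, with only $\varepsilon$ loss, produces the $L^4$-based right-hand side. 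These last two ingredients (Kadec and local smoothing) are exactly what is missing from your plan.

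The sharpness argument you propose is also incorrect. A single zonal harmonic evolves by a unimodular phase, so $\|e^{it\Delta_{\mathbb S^n}}Z_k\|_{L^4(\mathbb T\times\mathbb S^n)}\approx\|Z_k\|_{L^4(\mathbb S^n)}$, while $\|Z_k\|_{W^{\alpha,4}(\mathbb S^n)}\approx k^{\alpha}\|Z_k\|_{L^4(\mathbb S^n)}$: both sides of \eqref{e1.8} carry the same $L^4$ norm of $Z_k$, so this test only forces $\alpha\ge0$. Comparing the left side with $\|Z_k\|_{L^2}$, as you do, proves sharpness of the $L^2$-based estimate \eqref{e1.6}, not of \eqref{e1.8}. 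To get $\alpha\ge (n-2)/4$ one must exploit the time averaging: the paper's Proposition~\ref{prop2.3} takes $f=(1-\Delta_{\mathbb S^n})^{-\alpha/2}e^{-ih\Delta_{\mathbb S^n}}\varphi(h\sqrt{-\Delta_{\mathbb S^n}})(x,\cdot)$, controls $\|f\|_{W^{\alpha,4}}$ by the semiclassical dispersive bound $\|e^{-ih\Delta_{\mathbb S^n}}\varphi(h\sqrt{-\Delta_{\mathbb S^n}})\|_{L^\infty}\lesssim h^{-n/2}$, and uses the refocusing of the solution for $|t-h|\le ch^{2}$ on a ball of radius $\sim h$ to obtain the lower bound $\|e^{it\Delta_{\mathbb S^n}}f\|_{L^p(\mathbb T\times\mathbb S^n)}^p\gtrsim h^{2}h^{n}h^{-(n-\alpha)p}$; comparing exponents yields $\alpha\ge n\big(\tfrac12-\tfrac1p\big)-\tfrac2p$, which at $p=4$ is exactly $(n-2)/4$.
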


As a consequence of  Theorem~\ref{Thm1.1},
we 
  deduce the following space-time  estimates
 for the Schr\"odinger  propagator $e^{it\Delta_{\mathbb S^n}}$
 on the  
 $L^p$ spaces,  
 $2\leq p\leq \infty$.  
 This result is optimal
when $n=2$ and $2\leq p\leq \infty$,
and when
 $n\geq 3$ and $4\leq p\leq \infty$.

\begin{cor}
\label{cor1.2}\  Let  $n\geq 2$ and
$2\leq p\leq \infty$. Then the following estimate 
\begin{eqnarray}\label{e1.9}
 \left\|  e^{it\Delta_{\mathbb S^n}} f\right\|_{L^p( \mathbb T\times \mathbb S^n)} \leq C    \|f\|_{W^{\alpha, p}(\mathbb S^n)} 
 \end{eqnarray}
holds for $\alpha>\alpha_0(p, n)$, where $\alpha_0(p, 2)= \max \{ 0, {1- 4/p}  \}$  
  and,  
  for $n\geq 3$,
\begin{eqnarray}\label{e1.10}
\alpha_0(p, n)=
\left\{
\begin{array}{llll}
  n\big({1\over 2}-{1\over p}\big)+ {2\over p} -1, \ \ \ &{\rm if}
&2\leq p\leq 4;\\[6pt]
  n\big({1\over 2}-{1\over p}\big)- {2\over p},\ \ \ &{\rm if}
&4<p\leq \infty.
\end{array}
\right.
\end{eqnarray}
 Conversely,   if  \eqref{e1.9}  holds,
 then
 $\alpha\ge \max \{ 0, n({1/ 2}-{1/ p}) -{2/ p} \}.$
 \end{cor}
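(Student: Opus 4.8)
The plan is to derive Corollary~\ref{cor1.2} from Theorem~\ref{Thm1.1} together with the elementary estimates \eqref{e1.3}--\eqref{e1.4} by real interpolation, treating the two ranges $2\le p\le 4$ and $4\le p\le\infty$ separately.

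\medskip

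\emph{The range $4\le p\le\infty$.} First note that the two extreme cases are already available: at $p=4$, Theorem~\ref{Thm1.1} gives the bound with any $\alpha>(n-2)/4=n(1/2-1/4)-1/2$, which is exactly $\alpha_0(4,n)$; and at $p=\infty$, the Sobolev embedding \eqref{e1.3} (taken in the $t$-uniform form, then trivially in $L^\infty(\mathbb T)$) gives \eqref{e1.9} for $\alpha>n/2$, which is $\alpha_0(\infty,n)=n/2$. Wait—more carefully, at $p=\infty$ one needs $\alpha$ slightly above $n/2$, matching $\alpha_0(\infty,n)$. I would then interpolate the pair of vector-valued estimates $e^{it\Delta_{\mathbb S^n}}:W^{\alpha,4}(\mathbb S^n)\to L^4(\mathbb T\times\mathbb S^n)$ and $W^{\alpha,\infty}(\mathbb S^n)\to L^\infty(\mathbb T\times\mathbb S^n)$. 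Using complex interpolation on the Sobolev/Lebesgue scale (the standard fact that $[W^{\alpha_0,p_0},W^{\alpha_1,p_1}]_\theta=W^{\alpha_\theta,p_\theta}$ with $\alpha_\theta,p_\theta$ the obvious convex combinations/harmonic means, and likewise for the mixed-norm $L^p(\mathbb T\times\mathbb S^n)$ spaces on the target side), with $1/p=\theta/4+(1-\theta)/\infty$, i.e.\ $\theta=4/p$, one gets the admissible exponent
$$
\alpha>\theta\cdot\frac{n-2}{4}+(1-\theta)\cdot\frac n2=\frac{n-2}{p}+\frac n2\Big(1-\frac4p\Big)=n\Big(\frac12-\frac1p\Big)-\frac2p,
$$
which is precisely \eqref{e1.10} in this range. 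One must be a little careful at the $L^\infty$ endpoint of interpolation (it is not a genuine interpolation couple in the complex method because $L^\infty$ fails reflexivity and the Sobolev embedding at $p=\infty$ requires $\alpha>n/2$ strictly); the clean fix is to interpolate instead between $p=4$ and a large but finite $p_1$, send $p_1\to\infty$, and take the infimum over admissible $\alpha$, or equivalently to run the argument with Besov-space endpoints and then embed. I expect this $L^\infty$-endpoint bookkeeping to be the only real technical nuisance here.

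\medskip

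\emph{The range $2\le p\le4$.} Here the endpoints are $p=2$, where $e^{it\Delta_{\mathbb S^n}}$ acts isometrically on $L^2(\mathbb S^n)$ and hence (taking $L^2(\mathbb T)$) on $L^2(\mathbb T\times\mathbb S^n)$ with $\alpha=0=\alpha_0(2,n)$, and $p=4$ with $\alpha_0(4,n)=(n-2)/4$ from Theorem~\ref{Thm1.1}. Interpolating the pair $W^{0,2}\to L^2(\mathbb T\times\mathbb S^n)$ and $W^{\alpha,4}\to L^4(\mathbb T\times\mathbb S^n)$ with $\theta$ determined by $1/p=(1-\theta)/2+\theta/4$, i.e.\ $\theta=2(2/p-1)\cdot 2=4-8/p$... let me recompute: $1/p=(1-\theta)/2+\theta/4=1/2-\theta/4$, so $\theta=4(1/2-1/p)=2-4/p$. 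The admissible Sobolev order is then
$$
\alpha>\theta\cdot\frac{n-2}{4}=\Big(2-\frac4p\Big)\frac{n-2}{4}=\frac{n-2}{2}-\frac{n-2}{p}.
$$
This should equal $\alpha_0(p,n)=n(1/2-1/p)+2/p-1$; indeed $n(1/2-1/p)+2/p-1=n/2-1-(n-2)/p=(n-2)/2-(n-2)/p$, so they match. Thus complex interpolation between the $L^2$ isometry and Theorem~\ref{Thm1.1} yields \eqref{e1.9} for $\alpha>\alpha_0(p,n)$ throughout $2\le p\le4$, and for $n=2$ this reads $\alpha>\max\{0,1-4/p\}$ after incorporating the $\varepsilon$-loss version of Theorem~\ref{Thm1.1} at $p=4$.

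\medskip

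\emph{The necessity part.} For the converse one tests \eqref{e1.9} on suitable examples. Two lower bounds are needed: (i) $\alpha\ge0$, which follows by taking $f$ in a single spherical harmonic space $\mathscr H^n_k$ of large degree, for which $\|e^{it\Delta_{\mathbb S^n}}f\|_{L^p(\mathbb T\times\mathbb S^n)}\gtrsim\|f\|_{L^p(\mathbb S^n)}$ while $\|f\|_{W^{\alpha,p}}\sim k^\alpha\|f\|_{L^p}$, forcing $k^{-\alpha}\lesssim1$ for all $k$; and (ii) $\alpha\ge n(1/2-1/p)-2/p$, which is the sharpness already recorded for \eqref{e1.8} when $p=4$ (Theorem~\ref{Thm1.1} states \eqref{e1.8} fails for $\alpha<(n-2)/4$) transplanted to general $p$ by the same extremizer: one uses a highest-weight (zonal-type) spherical harmonic or a Knapp-type cap example concentrated near a great circle, for which the time-average over $\mathbb T$ produces no gain and a direct computation of both sides gives the stated exponent. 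The main obstacle in this part is constructing or citing the right counterexample uniformly in $p$; I expect to reduce it to the $p=4$ sharpness from Theorem~\ref{Thm1.1} by interpolation-type or direct-scaling arguments, so that no genuinely new example is required.
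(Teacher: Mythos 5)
Your sufficiency argument is essentially the paper's: both ranges are obtained by interpolating the $L^4$ bound of Theorem~\ref{Thm1.1} with the trivial $p=2$ endpoint (\eqref{e1.4} with $p=2$) and with an $L^\infty$-type endpoint at regularity just above $n/2$. The paper implements this via Stein's interpolation for the analytic family $T_z=e^{z^2}(1-\Delta_{\mathbb S^n})^{-(n-2+4\varepsilon+(n+2)z)/4}e^{it\Delta_{\mathbb S^n}}$, using bounds for imaginary powers $(1-\Delta_{\mathbb S^n})^{iy}$ on $L^4$ (from \cite{DOS}) and, crucially, Lemma~\ref{le2.3}, which supplies the $L^\infty(\mathbb S^n)\to L^\infty(\mathbb T\times\mathbb S^n)$ endpoint uniformly in the imaginary order via a kernel estimate for zonal harmonics; this is exactly how the paper sidesteps the $L^\infty$ interpolation issue you flag. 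Your alternative — complex interpolation on the scale $W^{\alpha,p}=H^\alpha_p$ with both exponents finite, then interpolating between $p=4$ and a large finite $p_1$ (using \eqref{e1.4}) and letting $p_1\to\infty$, with $p=\infty$ handled directly by Sobolev embedding — is legitimate and yields the same open range $\alpha>\alpha_0(p,n)$; your exponent arithmetic in both ranges is correct. So the positive part is fine, with only a cosmetic difference in how the $L^\infty$ endpoint is packaged.

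The necessity part, however, has a genuine gap. First, the examples you propose for the bound $\alpha\ge n(1/2-1/p)-2/p$ cannot work: a highest-weight (or any single) spherical harmonic lies in one eigenspace, so $e^{it\Delta_{\mathbb S^n}}$ acts on it by a unimodular scalar and the time average costs nothing — such data can only give $\alpha\ge 0$, never the dispersive lower bound; concentration near a great circle has the same defect. Second, "transplanting" the $p=4$ sharpness to general $p$ by interpolation is not sound: interpolation transfers boundedness, not failure. The best one can do by contradiction (if \eqref{e1.9} held at some $p>4$ with exponent $\alpha_p$, interpolate with the $p=2$ isometry to reach $p=4$) yields only $\alpha_p\ge (n-2)\big(\tfrac12-\tfrac1p\big)$, which is strictly smaller than the claimed $n\big(\tfrac12-\tfrac1p\big)-\tfrac2p$ for every $p>4$; for $2<p<4$ no such reduction is available at all. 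What is actually needed — and what the paper does in Proposition~\ref{prop2.3} — is a point-concentration (refocusing) example built for each $p$ directly: take $f(y)=(1-\Delta_{\mathbb S^n})^{-\alpha/2}e^{-ih\Delta_{\mathbb S^n}}\varphi(h\sqrt{-\Delta_{\mathbb S^n}})(x,y)$, bound $\|f\|_{W^{\alpha,p}}\lesssim h^{-n/2}$ by the semiclassical dispersive estimate of Burq--G\'erard--Tzvetkov, and observe that for $|t-h|\lesssim h^2$ and $d(x,y)\lesssim h$ the solution refocuses, so that $\|e^{it\Delta_{\mathbb S^n}}f\|_{L^p(\mathbb T\times\mathbb S^n)}^p\gtrsim h^{2+n}h^{-(n-\alpha)p}$; comparing the two bounds gives $\alpha\ge n\big(\tfrac12-\tfrac1p\big)-\tfrac2p$. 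Without this (or an equivalent) construction, the converse half of Corollary~\ref{cor1.2} is not proved; your $\alpha\ge0$ argument via a single spherical harmonic is the only part of the necessity that stands as written.
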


  From \eqref{e1.6} and \eqref{e1.7}, the proof of \eqref{e1.8}  in
  Theorem~\ref{Thm1.1} reduces  to show  it for the case $n=2$, whose
   proof  combines several arguments: firstly,
	using a number-theoretic argument in Burq-G\'{e}rard-Tzvetkov \cite{BGT2005},
   	we reduce the $L^4_t$ norm (the norm in $t$) to the $L^2_t$ norm. This makes use of the fact that
   	the eigenvalues of $\Delta_{\mathbb S^2}$ essentially take quadratic values (such an argument goes back to
   	earlier work on $\Lambda(p)$ sets in the 60's).
  Secondly, using the spectral information, we apply an
  almost orthogonality result of Kadec \cite{Kadec1964} to reduce the
  $L^2_t$ norm of the Schr\"{o}dinger evolution $e^{it\Delta}f$ to that of
  the half-wave evolution $e^{it\sqrt{- \Delta_{\mathbb S^2}}}f$.
  This allows us to apply an $L^4_x\rightarrow L^4_x L^2_t$ local smoothing estimate for the half-wave group
  $e^{it\sqrt{- \Delta_{\mathbb S^2}}}$ due to Mockenhaupt-Seeger-Sogge \cite{MSS1993} to conclude the proof
  of \eqref{e1.8} for $n=2$.

  The sharpness of  Theorem~\ref{Thm1.1} 
   is proved by using a  semiclassical
  dispersion estimate of Burq-G\'{e}rard-Tzvetkov \cite{BGT2004} as well as ideas from Rogers \cite{Rogers2008}.
  We remark that the same argument can be used to show that, on a general compact manifold of dimension $n$,
  a necessary condition for the analogue of \eqref{e1.8} to hold is $\alpha\ge\max\{0, n\big({1/2}-{1/ p}\big)- {2/p}\}$;
  in particular, sharp regularity estimates on $\mathbb T^2$   follow immediately from the Strichartz
  estimates in Bourgain \cite{Bourgain1993} and Bourgain-Demeter \cite{BourgainDemeter2015} for the Schr\"{o}dinger equation.

\smallskip

   In the second part of the paper, we   consider  
   the problem of identifying the values  
   $\alpha$   for which
\begin{eqnarray}\label{e1.11}
\left\| \sup_{0\leq t< 2\pi} |e^{it\Delta}f| \right\|_{L^q(\mathbb S^n)} \leq C(\alpha) \| f\|_{W^{\alpha, 2}(\mathbb S^n)}
\end{eqnarray}
holds for some $2\leq q<\infty$.
Inequality  
\eqref{e1.11} has implications
on  
the existence almost everywhere of $\lim_{t\to 0} u(t,x)$   for solutions $u$ of the
Schr\"odinger equation \eqref{e1.1}.
On the circle $\mathbb S^1$,   Moyua and Vega \cite{MoyuaVega2008} 
used \eqref{e1.b3} and the Sobolev embedding to   
show 
that  
\eqref{e1.11} 
holds with  
$\alpha>{1/3}$ 
and  
 $q=6$.  
They  
also point out that  $\alpha\geq {1/ 4}$ is a necessary condition for \eqref{e1.11} 
to be true.
For the sphere $\mathbb S^n, n\geq 2$,
    Wang and Zhang \cite {WZ}  proved  \eqref{e1.11} with 
    $\alpha>{1/2}$ 
    and $q=2$ 
    by taking
full advantage of the spectrum concentration.

The second goal of this paper is to investigate  \eqref{e1.11} for  
zonal functions  
on  
$\mathbb S^n$. 
We will show the following. 

 \begin{thm}
\label{Thm1.2}\    Let $n\geq 2$. For any $\alpha>{1/3}$,  there exists a constant
$C=C(\alpha)>0$ such that
 for any  zonal
 polynomial $f$,   
\begin{eqnarray}\label{e1.12}
\left\| \sup_{0\leq t< 2\pi} |e^{it \Delta_{\mathbb S^n}}f| \right\|_{L^{\frac{6n}{3n-2}}(\mathbb S^n)}
 \leq C \| f\|_{W^{\alpha,2}(\mathbb S^n)}.
\end{eqnarray}
 Moreover,  
 estimate \eqref{e1.12} fails when   $\alpha<{1/4}$, even if the left-hand size is replaced by the $L^1$ norm.
 \end{thm}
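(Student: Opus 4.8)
The plan is to exploit the rotational symmetry of a zonal function to reduce the maximal estimate \eqref{e1.12} to a one–dimensional problem in the polar variable, where we can bring to bear sharp one–dimensional maximal estimates for oscillatory sums of the kind that appear in the study of \eqref{e1.11} on $\mathbb S^1$. A zonal polynomial $f$ on $\mathbb S^n$ can be written $f(x)=\sum_{k=0}^{N} a_k Z_k^{(n)}(x\cdot e)$ for a fixed pole $e$, where $Z_k^{(n)}$ is the zonal harmonic of degree $k$; writing $x\cdot e=\cos\theta$, we have $Z_k^{(n)}(\cos\theta)=c_{k,n}\, C_k^{(n-1)/2}(\cos\theta)$ in terms of Gegenbauer polynomials, and $e^{it\Delta_{\mathbb S^n}}f$ stays zonal for all $t$, equal to $\sum_k e^{-itk(k+n-1)}a_k Z_k^{(n)}(\cos\theta)$. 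Thus the first step is to record that both sides of \eqref{e1.12} become weighted integrals in $\theta\in[0,\pi]$ against the density $(\sin\theta)^{n-1}\wrt\theta$, and that the Sobolev norm $\|f\|_{W^{\alpha,2}(\mathbb S^n)}^2\sim\sum_k (1+k)^{2\alpha}\,|a_k|^2\,\|Z_k^{(n)}\|_{L^2(\mathbb S^n)}^2\sim\sum_k (1+k)^{2\alpha+n-1}|a_k|^2$, using $\dim{\mathscr H}^n_k\sim k^{n-1}$ and that $Z_k^{(n)}$ reproduces the projection.

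The second and main step is a pointwise/pointwise-in-$\theta$ estimate for the Gegenbauer kernel: away from the poles, i.e. for $\theta$ bounded away from $0$ and $\pi$, one has the classical asymptotic $Z_k^{(n)}(\cos\theta)=c\,k^{(n-2)/2}(\sin\theta)^{-(n-1)/2}\big(e^{i(k+\frac{n-1}{2})\theta}e^{i\sigma}+e^{-i(k+\frac{n-1}{2})\theta}e^{-i\sigma}\big)+O\big(k^{(n-4)/2}(\sin\theta)^{-(n+1)/2}\big)$. Substituting this into $e^{it\Delta_{\mathbb S^n}}f$ turns the sum, modulo acceptable error terms near the poles handled by crude bounds, into a superposition of one–dimensional exponential sums $\sum_k b_k e^{-itk(k+n-1)}e^{\pm ik\theta}$ with $b_k=k^{(n-2)/2}a_k$. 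This is exactly the object controlled on $\mathbb S^1$: by Moyua–Vega's argument (combining Bourgain's $L^6$ bound \eqref{e1.b3} with Sobolev embedding), the maximal function $\sup_{0\le t<2\pi}\big|\sum_k b_k e^{-itk(k+n-1)}e^{ik\theta}\big|$ has $L^6_\theta(\mathbb S^1)$ norm bounded by $C_\beta\big(\sum_k (1+k)^{2\beta}|b_k|^2\big)^{1/2}$ for every $\beta>1/3$. The plan is to apply this for a.e. fixed $\theta$-slice after extracting the factor $(\sin\theta)^{-(n-1)/2}$, then integrate in $\theta$ against $(\sin\theta)^{n-1}\wrt\theta$; the resulting power of $\sin\theta$ is $(\sin\theta)^{n-1-\frac{6(n-1)}{2}}=(\sin\theta)^{-2(n-1)}$ raised to… — more precisely one raises the $L^6_\theta$-slice bound to the power appropriate to $q=\frac{6n}{3n-2}$ and checks that $\int_0^\pi (\sin\theta)^{(n-1)-q\frac{n-1}{2}}\wrt\theta<\infty$, which holds precisely because $q\frac{n-1}{2}-(n-1)=\frac{(n-1)(6n-2(3n-2))}{2(3n-2)}=\frac{2(n-1)}{3n-2}<1$. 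Tracking the Sobolev exponents, $\sum_k (1+k)^{2\beta}|b_k|^2=\sum_k(1+k)^{2\beta+n-2}|a_k|^2$, and one wants this $\lesssim\sum_k(1+k)^{2\alpha+n-1}|a_k|^2$, i.e. $2\beta+n-2\le 2\alpha+n-1$, i.e. $\beta\le\alpha+\tfrac12$; since $\beta>1/3$ suffices on $\mathbb S^1$, any $\alpha>1/3$ works (in fact with room to spare, consistent with the gain in passing to the smaller exponent $\frac{6n}{3n-2}<6$). The contribution near the two poles $\theta\in[0,\delta]\cup[\pi-\delta,\pi]$ is handled separately: there $|e^{it\Delta_{\mathbb S^n}}f|\le\sum_k|a_k|\,\|Z_k^{(n)}\|_{L^\infty}\lesssim\sum_k|a_k|k^{n-1}$, and one uses Cauchy–Schwarz with weight $(1+k)^{-2\alpha-n+1}$ together with the small measure $O(\delta^n)$ of the polar caps; this is where one must be slightly careful, as the $L^\infty$ bound on $Z_k^{(n)}$ is large, but the factor $\delta^n$ from the caps and a single application of Cauchy–Schwarz in $k$ with $\alpha>1/3$ (hence $2\alpha+n-1>n-1$) more than compensates, and letting $\delta$ depend mildly on the frequency truncation if needed.

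The hard part, and the place where the argument is not merely bookkeeping, is the transition in the second step: justifying that the Gegenbauer/Hilb-type asymptotic can be inserted \emph{uniformly} and that the error terms, when summed against all the $a_k$ and then maximized over $t$, still obey a square-function bound with an admissible loss. One must be careful that the asymptotic has error $O(k^{(n-2)/2}(k\sin\theta)^{-1}(\sin\theta)^{-(n-1)/2})$ which degrades near the poles, so the split point $\delta$ between the "interior" regime and the "polar cap" regime has to be chosen in balance with $k$ (e.g. $\theta\gtrsim 1/k$), and the intermediate zone $1/k\lesssim\theta\lesssim\delta$ must be absorbed into either estimate — this is the standard but delicate Bessel-function packaging. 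Once that packaging is in place, the maximal estimate is inherited slice-by-slice from the one-dimensional result of Moyua–Vega, and the exponent $q=\frac{6n}{3n-2}$ emerges exactly as the value for which the polar weight $(\sin\theta)^{n-1}$ is integrable after the $L^6_\theta$ slice bound is raised to the $q$-th power. For the sharpness claim, the plan is to test \eqref{e1.12} on the zonal Dirichlet-type kernel $f_N=\sum_{k\le N} Z_k^{(n)}$ (or a smooth variant), for which $\|f_N\|_{W^{\alpha,2}}^2\sim N^{2\alpha+n}$; at a well-chosen rational time $t\approx 2\pi p/q_0$ with small denominator one gets, by a Gauss-sum computation in the $\theta$-variable near the pole, that $|e^{it\Delta_{\mathbb S^n}}f_N(\theta)|\gtrsim N^{n}$ on a $\theta$-set of measure $\gtrsim N^{-?}$ contributing $\gtrsim N^{?}$ to even the $L^1$ norm, forcing $\alpha\ge 1/4$; this mirrors the $\mathbb S^1$ counterexample of Moyua–Vega transplanted to the cap near the pole via the same Gegenbauer asymptotic, and the numerology $1/4$ is exactly the one-dimensional threshold.
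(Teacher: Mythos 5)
Your sufficiency argument follows essentially the same route as the paper (Szeg\H{o}'s asymptotic for the zonal harmonics away from the poles, reduction to the exponential sums $\sum_k b_k e^{-itk(k+n-1)}e^{\pm ik\theta}$, the Bourgain--Moyua--Vega $L^6$ maximal bound, and a crude Sobolev bound on polar caps of radius $\sim N^{-1}$), but two points of bookkeeping are off. First, the Moyua--Vega bound is an \emph{integrated} $L^6_\theta$ estimate, not a slice-wise pointwise one, so the passage from it to the weighted $L^q$ integral must go through H\"older with exponents $6/q$ and $6/(6-q)$; at $q=\tfrac{6n}{3n-2}$ the dual weight is $\big((\sin\theta)^{(n-1)(1-q/2)}\big)^{6/(6-q)}=(\sin\theta)^{-1}$, which is \emph{not} integrable on $(0,\pi)$ --- one only gets a $\log N$ factor after cutting at $\theta\gtrsim N^{-1}$, absorbed by the $\varepsilon$-room in $\alpha>1/3$. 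Your check that $(\sin\theta)^{(n-1)-q(n-1)/2}$ is integrable is not the relevant condition, and the claim of ``room to spare'' is false: both the interior H\"older step and the polar-cap/Sobolev step are simultaneously critical at $q=\tfrac{6n}{3n-2}$, $\alpha=1/3$ (the cap computation, which you assert rather than carry out, gives $N^{(n/2-\alpha)q-n}\le N^{0}$ exactly when $\alpha\ge\tfrac n2-\tfrac nq=\tfrac13$). Second, matching the validity region $\theta\gtrsim 1/k$ of the asymptotic with a single cutoff requires a dyadic decomposition in $k$ (then summed using $\alpha>1/3$); you flag this but do not resolve it --- it is routine, and with these repairs the sufficiency half is sound.

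The necessity half has a genuine gap. You propose a ``Gauss-sum computation in the $\theta$-variable near the pole'' with a claimed lower bound $\gtrsim N^n$ and unspecified measure; but near the pole the zonal harmonics are non-oscillatory (the cosine asymptotic fails for $k\theta\lesssim1$), so no Gauss-sum structure forms there, and a pole concentration of size $N^n$ on a cap of measure $N^{-n}$ contributes only $O(1)$ to the $L^1$ norm, giving no constraint on $\alpha$. The correct transplantation of Moyua--Vega lives \emph{away} from the poles: one takes $t=2\pi/q$ with $q\sim\sqrt N$ odd and $\theta$ near even-numerator rationals $2\pi p/q$ in a fixed subinterval of $(0,\pi)$, obtaining $Mf_N\gtrsim N^{3/4}$ on a union of $\sim N$ intervals of length $\sim N^{-1}$ with total measure bounded below, which against $\|f_N\|_{W^{\alpha,2}}\lesssim N^{1/2+\alpha}$ forces $\alpha\ge1/4$. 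Moreover, the step your sketch never addresses is precisely the one that is new on $\mathbb S^n$: the zonal asymptotic is a \emph{cosine}, so $e^{it\Delta_{\mathbb S^n}}f_N$ splits into two conjugate sums $S_\pm e^{\pm i\phi_n(\theta)}$, and the lower bound requires verifying that the two Gauss-sum contributions do not cancel. This forces an explicit computation of the argument of the Gauss sum (the factor $\omega_q$ depending on $q\bmod 4$, together with the phase $\tfrac{n-1}{4}\pi$), producing a factor $\cos\big(\tfrac{n-1}{4}\pi-\ell q\eta\big)$ that can degenerate for certain $n\bmod 8$ and is salvaged only by restricting to $|\eta|\gtrsim N^{-1}$, i.e.\ a slightly smaller blow-up set. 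Without this non-cancellation analysis the claimed failure for $\alpha<1/4$ is not established.
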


   The proof of Theorem~\ref{Thm1.2} utilizes asymptotic formulas for the zonal spherical harmonics to reduce
   the estimates to the setting of $\mathbb S^1$. More precisely, we use an asymptotic formula from Szeg\"o \cite{Sz}
   to expand (up to an error term) the zonal function away from the poles into a modulated cosine series. The sufficiency
   part then follows from a maximal inequality for the Schr\"odinger equation on $\mathbb S^1$ and the Sobolev embedding
    (the latter is used to bound the maximal function near the poles). The proof of the necessity part is an adaptation of
	 a counterexample on $\mathbb S^1$ due to Moyua-Vega \cite{MoyuaVega2008}, who used the Gauss sum to show that $s\ge {1/ 4}$
	 is necessary for the Schr\"odinger maximal function on $\mathbb S^1$ to be bounded. In our case, by writing the cosine
	 function as a sum of conjugate exponentials, we are led to consider how conjugate initial data are evolved under
	 the Schr\"odinger equation (in particular, how do they add for specific initial data). By examining the argument
	 of the value of the Gauss sum, we manage to show that Moyua-Vega's counterexample carries over to $\mathbb S^n$,
	 with the blowup occuring on a set of possibly smaller measure.

    As a consequence of Theorem~\ref{Thm1.2}, we have the following result.
	
    \begin{cor}
\label{cor1.3}\   Let $n\geq 2$. For any $\alpha>{1/3}$,   the solution $u(t,x)$ to equation
 \eqref{e1.1} converges pointwise to the initial data $f$, whenever  a zonal function  $f
 $ belongs to $ W^{\alpha, 2} (\mathbb S^n)$ for $\alpha>{1/ 3}.$
 \end{cor}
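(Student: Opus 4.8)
The plan is to deduce Corollary~\ref{cor1.3} from the maximal estimate \eqref{e1.12} by the standard density argument, so that all the analytic content remains in Theorem~\ref{Thm1.2}. Two elementary facts will be used repeatedly. First, the zonal polynomials are dense in the subspace of zonal functions of $W^{\alpha,2}(\mathbb S^n)$: for a zonal $f$ each projection $\mathbb P^n_k f$ is a zonal polynomial and the partial sums $\sum_{k\le N}\mathbb P^n_k f$ converge to $f$ in $W^{\alpha,2}(\mathbb S^n)$. Second, since $\alpha>1/3$ one has the Sobolev embedding $W^{\alpha,2}(\mathbb S^n)\hookrightarrow L^{\frac{6n}{3n-2}}(\mathbb S^n)$; this is exactly the critical embedding at $\alpha=1/3$ (note $\tfrac{3n-2}{6n}=\tfrac{1}{2}-\tfrac{1}{3n}$), which is precisely why the Lebesgue exponent $\tfrac{6n}{3n-2}$ occurs in \eqref{e1.12}. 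In particular $f\in L^{\frac{6n}{3n-2}}(\mathbb S^n)$, and a $W^{\alpha,2}$-approximating sequence of zonal polynomials also converges to $f$ in $L^{\frac{6n}{3n-2}}(\mathbb S^n)$.

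The first step is to upgrade \eqref{e1.12} from zonal polynomials to an arbitrary zonal $f\in W^{\alpha,2}(\mathbb S^n)$. I would pick zonal polynomials $f_j\to f$ in $W^{\alpha,2}(\mathbb S^n)$ and, passing to a subsequence, arrange $\|f_{j+1}-f_j\|_{W^{\alpha,2}(\mathbb S^n)}\le 2^{-j}$. Applying \eqref{e1.12} to each difference and summing shows that $\sum_j \sup_{0\leq t<2\pi}\big|e^{it\Delta_{\mathbb S^n}}(f_{j+1}-f_j)\big|$ belongs to $L^{\frac{6n}{3n-2}}(\mathbb S^n)$, hence is finite for a.e.\ $x$. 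Consequently, for a.e.\ $x$ the functions $t\mapsto e^{it\Delta_{\mathbb S^n}}f_j(x)$ form a uniformly Cauchy sequence on $\mathbb T$; I denote their limit again by $e^{it\Delta_{\mathbb S^n}}f(x)$, which for each fixed $t$ coincides with the spectrally defined propagator \eqref{e1.2} (both being the $L^2$-limit of $e^{it\Delta_{\mathbb S^n}}f_j$). Fatou's lemma then yields $\big\|\sup_{0\leq t<2\pi}|e^{it\Delta_{\mathbb S^n}}f|\big\|_{L^{\frac{6n}{3n-2}}(\mathbb S^n)}\le C\|f\|_{W^{\alpha,2}(\mathbb S^n)}$ for every zonal $f\in W^{\alpha,2}(\mathbb S^n)$.

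The second step is the oscillation argument. Each $e^{it\Delta_{\mathbb S^n}}f_j$ is a finite trigonometric sum in $t$, hence continuous on $\mathbb T$, whose value at $t=0$ is $f_j$; passing to a further subsequence so that $f_j\to f$ a.e.\ (possible since $f_j\to f$ in $L^{\frac{6n}{3n-2}}(\mathbb S^n)$), the uniform-in-$t$ convergence from the first step makes $t\mapsto e^{it\Delta_{\mathbb S^n}}f(x)$ continuous at $t=0$ with value $f(x)$ for a.e.\ $x$. To quantify this, for a.e.\ $x$ one has
\[
\limsup_{t\to 0}\big|e^{it\Delta_{\mathbb S^n}}f(x)-f(x)\big|\ \le\ \sup_{0\leq t<2\pi}\big|e^{it\Delta_{\mathbb S^n}}(f-f_j)(x)\big|+|f_j(x)-f(x)|,
\]
and taking the $L^{\frac{6n}{3n-2}}(\mathbb S^n)$ norm and applying the maximal estimate of the first step together with the Sobolev embedding bounds the right-hand side by $C\|f-f_j\|_{W^{\alpha,2}(\mathbb S^n)}\to 0$. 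Hence $\limsup_{t\to 0}|e^{it\Delta_{\mathbb S^n}}f(x)-f(x)|=0$ for a.e.\ $x$, i.e.\ the solution $u(t,x)=e^{it\Delta_{\mathbb S^n}}f(x)$ of \eqref{e1.1} converges to $f(x)$ for a.e.\ $x$.

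The only delicate point — and the main (if modest) obstacle — is the bookkeeping of limits in the first step: one must verify that the a.e.\ pointwise object $e^{it\Delta_{\mathbb S^n}}f$ constructed from the polynomial approximants does not depend on the approximating sequence and agrees with the spectral definition \eqref{e1.2}, and that $e^{it\Delta_{\mathbb S^n}}f_j(x)\to e^{it\Delta_{\mathbb S^n}}f(x)$ holds uniformly in $t$ for a.e.\ $x$ and not merely pointwise in $t$. Both are resolved by the summable-subsequence device above; apart from this, the corollary is a formal consequence of Theorem~\ref{Thm1.2}.
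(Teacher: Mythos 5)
Your argument is correct and is exactly the standard density/approximation deduction that the paper leaves implicit when it states Corollary~\ref{cor1.3} as a consequence of the maximal estimate \eqref{e1.12}: extend the maximal bound from zonal polynomials to all zonal $f\in W^{\alpha,2}(\mathbb S^n)$ via a summable subsequence, then run the usual oscillation argument using continuity in $t$ of the polynomial evolutions and the Sobolev embedding. Nothing is missing; your careful handling of the identification of the a.e.\ uniform limit with the spectrally defined propagator \eqref{e1.2} is the right way to make the routine step rigorous.
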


We would like to mention that in the Euclidean setting,   Carleson \cite{Ca}
  proposed  the problem of identifying the optimal $s$ for which
$$\lim_{t\to 0}e^{it\Delta} f(x) = f(x), \ \ \ \ \ \ {\rm a.e.}\ x\in\mathbb R^n
$$
 whenever  $f\in H^{\alpha}({\mathbb R}^n)$. In dimension one, Carleson \cite{Ca}
 proved convergence for    ${\alpha}\geq {1/4}$ and
 Dahlberg and Kenig \cite{DK}  showed that this   is sharp.
 In higher dimensions, this problem  was recently  settled
 by Du-Guth-Li \cite{DGL}   for $n=2$ and  ${\alpha}>{1/3}$;   and  by Du-Zhang \cite{DZ} for $n\geq 3$ and
 ${\alpha}>{n/ 2(n+1)}$.
   Due to a counterexample by Bourgain \cite{B}, up to the endpoint,
    these two latter results are sharp.
	In contrast to the case of $\mathbb R^n$, very little is known
	for  the sphere $\mathbb S^n$.	Even for the case of  $\mathbb S^1$,
	as pointed out  by Moyua and Vega \cite{MoyuaVega2008},
 	 the strategy of Carleson\cite{Ca} gives a worse result
	than the case on the real line  ${\mathbb R}$, and
	    we do not know whether $f\in W^{\alpha, 2}(\mathbb S^1)$ for ${\alpha}\geq {1/4}$
	is sufficient yet.
		It would be interesting to establish  the sharp version  of \eqref{e1.11} to find 
		 the optimal  value ${\alpha}$ for which \eqref{e1.11} holds.

The paper is organized as follows.
In Section 2 we 
first  
prove 
the $L^4$-estimate stated in  
Theorem~\ref{Thm1.1}. 
Then,  
by analytic interpolation, we obtain 
the  
$L^p(\mathbb T \times \mathbb S^n)$- 
estimates  
of $e^{it \Delta_{\mathbb S^n}}f$ 
stated  
in Corollary~\ref{cor1.2}, for 
$2\leq p\leq \infty$. 
 The  proof of    Theorem~\ref{Thm1.2}
 is   
 given in   Section 3.

   \medskip

\section{Proof of Theorem~\ref{Thm1.1}   }
 \setcounter{equation}{0}

To prove  \eqref{e1.8}  in
  Theorem~\ref{Thm1.1}, from \eqref{e1.6} and \eqref{e1.7} it suffices  to show it  for $n=2$,
  whose proof  is based on the following Lemma~\ref{le2.1} and Lemma~\ref{le2.2}.

\begin{lemma}
\label{le2.1} For any $\alpha>0$, there exists a constant $C=C(\alpha)$ independent of $f$ such that
\begin{align}
\label{e2.2}
\| e^{it \Delta_{\mathbb S^2}}f \|_{L^4(\mathbb T)}
 \leq C  \left(\sum_{k=0}^{\infty}  \big|(1- \Delta_{\mathbb S^2})^{\alpha/2} {\mathbb P}^2_k(f)\big|^2 \right)^{1/2}.
\end{align}
\end{lemma}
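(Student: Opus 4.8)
The plan is to prove Lemma~\ref{le2.1} pointwise in $x$. Fix $x\in\mathbb S^2$ and set $a_k:=\mathbb P^2_k(f)(x)$. Since $(1-\Delta_{\mathbb S^2})^{\alpha/2}$ acts on $\mathscr H^2_k$ by the scalar $(1+k(k+1))^{\alpha/2}\sim (1+k)^{\alpha}$, it suffices to establish, for every finitely supported sequence $(a_k)_{k\ge0}$, the scalar inequality
\[
\Big\|\sum_{k\ge0}e^{-itk(k+1)}a_k\Big\|_{L^4(\mathbb T)}\le C_\alpha\Big(\sum_{k\ge0}(1+k)^{2\alpha}|a_k|^2\Big)^{1/2};
\]
the case of a general $f$ (say a finite spherical harmonic expansion, which is all that is needed) then follows immediately, and the fully general case by a routine density and Fatou argument. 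This is a weighted $\Lambda(4)$-type estimate for the set of eigenvalues $\{k(k+1):k\ge0\}$, exploiting that these are ``essentially squares''.

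The main step is the classical identity $\|g\|_{L^4(\mathbb T)}^4=\|g^2\|_{L^2(\mathbb T)}^2$ applied to $g(t)=\sum_k e^{-itk(k+1)}a_k$. Expanding $g(t)^2=\sum_{m\ge0}c_m e^{-itm}$ with $c_m=\sum_{j(j+1)+k(k+1)=m}a_ja_k$, Parseval on $\mathbb T=[0,2\pi)$ gives $\|g^2\|_{L^2(\mathbb T)}^2=2\pi\sum_m|c_m|^2$, so everything reduces to controlling the multiplicity $N(m):=\#\{(j,k)\in\mathbb Z_{\ge0}^2:\,j(j+1)+k(k+1)=m\}$. Here the arithmetic input is the elementary identity $(2j+1)^2+(2k+1)^2=4\big(j(j+1)+k(k+1)\big)+2$, which yields $N(m)\le r_2(4m+2)$, where $r_2(\ell)$ is the number of representations of $\ell$ as an ordered sum of two squares; together with $r_2(\ell)\le 4\,d(\ell)$ and the classical divisor bound $d(\ell)=O_\delta(\ell^\delta)$ this gives $N(m)\le C_\delta\,m^\delta$ for every $\delta>0$.

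To conclude, Cauchy--Schwarz in the definition of $c_m$ gives $|c_m|^2\le N(m)\sum_{j(j+1)+k(k+1)=m}|a_j|^2|a_k|^2$, and --- this is the point that renders the $m^\delta$ factor harmless --- on the support of the inner sum one has $m=j(j+1)+k(k+1)\le 2(1+\max(j,k))^2\le 2(1+j)^2(1+k)^2$, so $N(m)\le C_\delta(1+j)^{2\delta}(1+k)^{2\delta}$ there. Summing over all $m$, equivalently over all pairs $(j,k)\in\mathbb Z_{\ge0}^2$, the weight separates and one obtains $\sum_m|c_m|^2\le C_\delta\big(\sum_k(1+k)^{2\delta}|a_k|^2\big)^2$. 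Choosing $\delta=\alpha$ gives $\|g\|_{L^4(\mathbb T)}^4\le C_\alpha\big(\sum_k(1+k)^{2\alpha}|a_k|^2\big)^2$, hence $\|g\|_{L^4(\mathbb T)}^2\le C_\alpha\sum_k(1+k)^{2\alpha}|a_k|^2$, which is the asserted bound once we pass back to the operator notation and recall that $(1+k)^{2\alpha}\sim(1+k(k+1))^{\alpha}$.

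I expect no genuine obstacle: the argument is classical in spirit, a variant of Zygmund's $L^4$ estimate \eqref{e1.1z} on $\mathbb S^1$. The only thing requiring a little care is the bookkeeping in the last paragraph, namely that the divisor factor $m^\delta$ genuinely majorizes as $(1+j)^{2\delta}(1+k)^{2\delta}$, so that the double sum separates into a square. (One could in fact reach $\alpha=0$ here by invoking Rudin's theorem that the squares form a $\Lambda(4)$ set, but the elementary argument above loses only an arbitrarily small power of the frequency, which is harmless for Theorem~\ref{Thm1.1} on $\mathbb S^2$.)
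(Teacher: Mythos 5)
Your argument is correct and is essentially the paper's own proof of Lemma~\ref{le2.1}: pointwise in $x$, expand the square of the evolution, apply Parseval in $t$, control the representation number of $j(j+1)+k(k+1)=m$ through the identity $(2j+1)^2+(2k+1)^2=4m+2$ together with the sum-of-two-squares/divisor bound, and then distribute the resulting $m^{\delta}$ onto the two frequencies so that the sum factors into the square of the weighted $\ell^2$ norm. One caveat on your closing parenthetical only: the squares form a $\Lambda(p)$ set for $p<4$ but \emph{not} for $p=4$ (indeed $\int_0^{2\pi}\big|\sum_{k\le N}e^{ik^2 t}\big|^4\,dt\gtrsim N^2\log N$), so the endpoint $\alpha=0$ is not reachable by that route; this does not affect your proof of the lemma, which only claims $\alpha>0$.
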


\begin{proof} 
The proof of Lemma~\ref{le2.1} is inspired by the result of
Burq-G\'{e}rard-Tzvetkov \cite[Proposition~3.1]{BGT2005} (with $u_0=v_0$).
From \eqref{e1.2}, we write
\begin{align*}
 | e^{it \Delta_{\mathbb S^2}}f |^2=
  \sum_{k,\ell=0}^{\infty}
e^{-i t[k(k+1)+\ell(\ell+1)]} {\mathbb P}^2_k(f){\mathbb P}^2_\ell(f).
\end{align*}
By the Parseval identity,
we get
\begin{eqnarray}\label{e2.3}
\left\| e^{it \Delta_{\mathbb S^2}}f \right\|_{L^4(\mathbb T)}^4
&=&\Big\|\sum_{s=0}^{\infty}
e^{-i ts} \sum_{k(k+1)+\ell(\ell+1)=s}{\mathbb P}^2_k(f){\mathbb P}^2_\ell(f)
\Big\|_{L^2(\mathbb T)}^2\nonumber \\
&=& 2\pi \sum_{s=0}^{\infty}
\Big| \sum_{k(k+1)+\ell(\ell+1)=s} {\mathbb P}^2_k(f){\mathbb P}^2_\ell(f) \Big|^2\nonumber \\
&\leq & 2\pi \sum_{s=0}^{\infty} r(s)
\sum_{k(k+1)+\ell(\ell+1)=s} \big| {\mathbb P}^2_k(f){\mathbb P}^2_\ell(f) \big|^2,
\end{eqnarray}
where
$$
r(s)=\#\big\{(k,l): k(k+1)+\ell(\ell+1)=s\big\}.
$$
Notice that
$$
k(k+1)+\ell(\ell+1)=s \iff
 (2k+1)^2 + (2\ell+1)^2 = 4s+2.
 $$
It follows from classical results (see \cite[Theorem~278]{HardyWright}) on the sum of squares function that  for any $\alpha>0$,
there exists a constant $C=C(\alpha)>0$
such that
$$
r(s)\leq C (1+s)^\alpha.
$$
Consequently, one can bound  \eqref{e2.3}  by
$$\sum_{s=0}^{\infty} (1+s)^{\alpha}
\sum_{k(k+1)+\ell(\ell+1)=s} \big| {\mathbb P}^2_k(f){\mathbb P}^2_\ell(f) \big|^2.$$
Since $s=k(k+1)+\ell(\ell+1)$, this can be bounded by
$$\sum_{s=0}^{\infty}
\sum_{k(k+1)+\ell(\ell+1)=s}  \big(1+k(k+1)\big)^{\alpha}\big(1+\ell(\ell+1)\big)^{\alpha}
\big| {\mathbb P}^2_k(f){\mathbb P}^2_\ell(f) \big|^2,$$
which equals
$$\left(\sum_{k=0}^{\infty}  |(I- \Delta_{\mathbb S^2})^{\alpha/2} {\mathbb P}^2_k(f)|^2\right)
\left(\sum_{\ell} |(I- \Delta_{\mathbb S^2})^{\alpha/2} {\mathbb P}^2_\ell(f)|^2\right).$$
Therefore,
$$
\| e^{it \Delta_{\mathbb S^2}}f \|_{L^4(\mathbb T)}^4
\leq C
\left(\sum_{k=0}^{\infty} |(I- \Delta_{\mathbb S^2})^{\alpha/2} {\mathbb P}^2_k(f)|^2\right)^2.
$$
This proves Lemma~\ref{le2.1}.
\end{proof}

Recall that the half-wave group on $L^2(\mathbb S^2)$ is defined by
\begin{eqnarray}\label{e2.4}
e^{it\sqrt{- \Delta_{\mathbb S^2}}}(f)=\sum_{k=0}^\infty e^{it\sqrt{k(k+1)}}{\mathbb P}^2_k(f).
\end{eqnarray}
 Lemma~\ref{le2.1}  provides a useful way to relate the Schr\"odinger group and
the half-wave group as in the following.

\begin{lemma}\label{le2.2}
We have
$$
\left(\sum_{k=0}^{\infty} |{\mathbb P}^2_k(f)|^2 \right)^{1/2}
\approx \left\| e^{it\sqrt{- \Delta_{\mathbb S^2}}}(f)-{\mathbb P}^2_0(f) \right\|_{L^2(\mathbb T)}+|{\mathbb P}^2_0(f)|.
$$
\end{lemma}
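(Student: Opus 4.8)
The plan is to read both sides of the claimed equivalence pointwise in $x\in\mathbb{S}^2$. Fixing $x$ and writing $c_k:=\mathbb{P}^2_k(f)(x)\in\mathbb{C}$, the left-hand side is $\big(\sum_{k\ge 0}|c_k|^2\big)^{1/2}$, while the $k=0$ term of $e^{it\sqrt{-\Delta_{\mathbb{S}^2}}}f$ equals $\mathbb{P}^2_0(f)$ (since $\sqrt{0\cdot 1}=0$), so the right-hand side is $\big\|\sum_{k\ge 1}c_k e^{it\sqrt{k(k+1)}}\big\|_{L^2(\mathbb{T})}+|c_0|$. Subtracting off the $k=0$ contribution from both sides, the lemma reduces to the one-dimensional statement that the exponential system $\{e^{it\sqrt{k(k+1)}}\}_{k\ge 1}$ is a Riesz sequence in $L^2(\mathbb{T})$, i.e. that there are absolute constants $0<A\le B$ with
\[
A\sum_{k\ge 1}|c_k|^2\ \le\ \Big\|\sum_{k\ge 1}c_k e^{it\sqrt{k(k+1)}}\Big\|_{L^2(\mathbb{T})}^2\ \le\ B\sum_{k\ge 1}|c_k|^2
\]
for all $(c_k)\in\ell^2$ (for a given $x$, if $(c_k)\notin\ell^2$ both sides are $+\infty$ by taking partial-sum limits, so nothing is lost). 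Granting this, one applies it with $c_k=\mathbb{P}^2_k(f)(x)$ for each $x$, adds $|c_0|^2$ back to both sides, and takes square roots (using $\sqrt{a+b}\approx\sqrt a+\sqrt b$); since $A,B$ are independent of $x$ and $f$, this gives the asserted equivalence with uniform constants.

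To prove the Riesz-sequence property we compare the frequencies $\mu_k:=\sqrt{k(k+1)}$ to the half-integers $k+\tfrac12$. From $\mu_k=\sqrt{(k+\tfrac12)^2-\tfrac14}=(k+\tfrac12)\sqrt{1-\tfrac{1}{4(k+1/2)^2}}$ one gets $(k+\tfrac12)-\mu_k=\tfrac{1}{8(k+1/2)}+O(k^{-3})>0$, strictly decreasing in $k\ge 1$, so
\[
\sup_{k\ge 1}\big|\mu_k-(k+\tfrac12)\big|=\tfrac32-\sqrt2=0.0857\ldots<\tfrac14 .
\]
Extend the sequence to all of $\mathbb{Z}$ by putting $\mu_k:=k+\tfrac12$ for $k\le 0$; then $(\mu_k)_{k\in\mathbb{Z}}$ is a sequence of distinct reals with $\sup_{k\in\mathbb{Z}}|\mu_k-(k+\tfrac12)|<\tfrac14$. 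Kadec's $\tfrac14$-theorem \cite{Kadec1964}, transported from its usual formulation on $L^2(-\pi,\pi)$ to $L^2(\mathbb{T})$ with the half-integer shift by means of the unitary operations (translation of the interval and multiplication by the unimodular weight $e^{it/2}$, which turn $\{e^{int}\}$-type bases into $\{e^{i(n+1/2)t}\}$-type bases), then shows that $\{e^{it\mu_k}\}_{k\in\mathbb{Z}}$ is a Riesz basis of $L^2(\mathbb{T})$.

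Finally, a subset of a Riesz basis is a Riesz basis of its closed linear span, hence a Riesz sequence in the ambient space with the same bounds $A,B$; taking the subset indexed by $k\ge 1$ yields the displayed two-sided inequality for $\{e^{it\sqrt{k(k+1)}}\}_{k\ge 1}$ and completes the proof. The argument is essentially soft once Kadec's theorem is available; the only genuine points are the numerical check that the deviation of the eigenfrequencies $\sqrt{k(k+1)}$ from the half-integer lattice stays strictly below the critical value $\tfrac14$ — which it does with room to spare, the extreme case being $k=1$ with deviation $\tfrac32-\sqrt2$ — together with the routine bookkeeping needed to move Kadec's statement onto $[0,2\pi)$ and onto the shifted lattice $\{k+\tfrac12\}$, and the observation that $k=0$ must be peeled off first (its frequency $0$ is distance $\tfrac12$ from the lattice), which is exactly why $\mathbb{P}^2_0(f)$ appears separately in the statement.
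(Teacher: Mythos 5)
Your proposal is correct and follows essentially the same route as the paper: both reduce the claim (read pointwise in $x$, with the $k=0$ term peeled off) to the Riesz-sequence property of $\{e^{it\sqrt{k(k+1)}}\}_{k\ge 1}$ via Kadec's $\tfrac14$-theorem after the half-integer modulation, using the bound $|\sqrt{k(k+1)}-k-\tfrac12|=\tfrac{1/4}{\sqrt{k(k+1)}+k+1/2}\le \tfrac32-\sqrt2<\tfrac14$. Your write-up merely makes explicit some details the paper leaves implicit (extending to a full Riesz basis over $\mathbb{Z}$ and passing to a subset, and the recombination with the $|{\mathbb P}^2_0(f)|$ term), which is consistent with the paper's argument.
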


\begin{proof}
Recall that an exponential system
$\{e^{i\lambda_kt}\}$ ($\lambda_k\in\mathbb R$) is said to be a Riesz sequence in $L^2(\mathbb T)$ if for any coefficients $\{c_k\}$,
$$
\Big\|\sum_{k=0}^{\infty} c_k e^{i\lambda_kt}\Big\|_{L^2(\mathbb T)}\approx
\Big(\sum_{k=0}^{\infty} |c_k|^2 \Big)^{1/2}.$$
A celebrated theorem of Kadec \cite{Kadec1964} implies that $\{e^{i\lambda_kt}\}$ forms a Riesz sequence in $L^2(\mathbb T)$ provided
$$\sup_k |\lambda_k-k|<\frac14.$$
By modulation, it is easy to see that the same conclusion holds if
$$\sup_k \Big|\lambda_k-k-\frac12\Big|<\frac14.$$

Take $\lambda_k=\sqrt{k(k+1)}$, $k\ge 1$. By direct checking, we see that
$$\left| \sqrt{k(k+1)}-k-\frac12\right|=
\frac{\frac14}{\sqrt{k(k+1)}+k+\frac12}\le \frac 18<\frac14.$$
Thus the last condition is satisfied, and so
$$\Big\|\sum_{k\ge 1} c_k e^{it\sqrt{k(k+1)}}\Big\|_{L^2(\mathbb T)}\approx
\Big(\sum_{k\ge1} |c_k|^2 \Big)^{1/2}.$$
With $c_k={\mathbb P}^2_k(f)$, we obtain
\begin{align*}
\Big(\sum_{k=0}^{\infty} |{\mathbb P}^2_k(f)|^2 \Big)^{1/2}
&\approx
\Big(\sum_{k\ge1} |{\mathbb P}^2_k(f)|^2 \Big)^{1/2}+|{\mathbb P}^2_0(f)|\\
&\approx \left\| e^{it\sqrt{- \Delta_{\mathbb S^2}}}(f)-{\mathbb P}^2_0(f)  \right\|_{L^2(\mathbb T)}+|{\mathbb P}^2_0(f)|.
\end{align*}
This proves Lemma~\ref{le2.2}.
\end{proof}

 Now we start to prove our main result, Theorem~\ref{Thm1.1}.

\begin{proof}[Proof of
  Theorem~\ref{Thm1.1}]
    To  prove
 \eqref{e1.8},    from \eqref{e1.6} and \eqref{e1.7} it suffices to show it for
  $n=2$. In this case, an essential observation   is   to
  reduce to the following
 local smoothing result by Mockenhaupt-Seeger-Sogge \cite[Theorem~6.2]{MSS1993}
 (with $X=Y=\mathbb S^2$, $\mathcal F=e^{it\sqrt{- \Delta_{\mathbb S^2}}}(1- \Delta_{\mathbb S^2})^{-\alpha/2}$, $p=q=4$):
For any $\alpha>0$, there exists a constant $C=C(\alpha)$
\begin{align}
\label{e2.5}
\Big\| e^{it\sqrt{- \Delta_{\mathbb S^2}}}f \Big\|_{L^4(\mathbb S^2;L^2(\mathbb T))}
\leq C \|(1- \Delta_{\mathbb S^2})^{\alpha/2}f\|_{L^4(\mathbb S^2)}.
\end{align}
Indeed, we apply Lemma~\ref{le2.2} with $f$ replaced by $(1- \Delta_{\mathbb S^2})^{\alpha/2}(f)$ to obtain
\begin{eqnarray*}
 &&\hspace{-0.6cm}\Big(\sum_{k=0}^{\infty}  |(1- \Delta_{\mathbb S^2})^{\alpha/2} {\mathbb P}^2_k (f)|^2 \Big)^{1/2}\notag \\
&\approx &\Big\| e^{it\sqrt{- \Delta_{\mathbb S^2}}}(1- \Delta_{\mathbb S^2})^{\alpha/2}f-{\mathbb P}^2_0(f)
\Big\|_{L^2(\mathbb T)}+|{\mathbb P}^2_0(f)|\notag\\
&\leq &C \Big\| e^{it\sqrt{- \Delta_{\mathbb S^2}}}(1- \Delta_{\mathbb S^2})^{\alpha/2}f \Big\|_{L^2(\mathbb T)}+C|{\mathbb P}^2_0(f)|.
\end{eqnarray*}
This, in combination with  Lemma~\ref{le2.1}, yields
\begin{eqnarray*}
\big\| e^{it \Delta_{\mathbb S^2}}f \big\|_{L^4(\mathbb T \times \mathbb S^2)}
\leq& C\Big\| e^{it\sqrt{- \Delta_{\mathbb S^2}}}(1- \Delta_{\mathbb S^2})^{\alpha/2}f \Big\|_{L^4(\mathbb S^2;L^2(\mathbb T))}
 +C\|{\mathbb P}^2_0(f)\|_{L^4(\mathbb S^2)}.
\end{eqnarray*}
Estimate \eqref{e1.8} then  follows readily from \eqref{e2.5}.  This proves the sufficiency part of
 Theorem~\ref{Thm1.1}.
\end{proof}

\begin{rem} 
	Note that if the Sobolev norm on the right-hand side of \eqref{e2.5} is based on $L^2$,
	Cardona and Esquivel \cite{CE} obtained  sharp $W_x^{\alpha,2}\rightarrow L^q_xL^2_t$ estimate on $\mathbb S^n$ 
	with the critical exponent $q= {2(n+1)}/{(n-1)}$ by using purely the $L^2\to L^p$
	spectral estimates for the operator norm of the spectral projections associated to the spherical harmonics proved in \cite{KL}.
	Using the method in the proof of Theorem~\ref{Thm1.1} above, their result can be deduced from a special case 
	of \cite[Theorem 3.2]{MSS1993} regarding the corresponding estimate for the half-wave equation, and vice versa. 
\end{rem}

   The sharpness of the range $\alpha$ in \eqref{e1.8} in Theorem~\ref{Thm1.1}
   is a special case of the following more general proposition.

	\begin{prop}\label{prop2.3}
	Let $n\ge1$ and let  $  \Delta_{\mathbb S^n}$ be the Laplace-Beltrami operator on  $\mathbb S^n$.
Suppose $p\geq 2$ and for some number $\alpha$ it holds that
\begin{align}
\label{e2.8}
\| e^{it  \Delta_{\mathbb S^n}}f \|_{L^p(\mathbb T \times \mathbb S^n)}
\leq C  \| f\|_{W^{\alpha, p}(\mathbb S^n)}.
\end{align}
	Then
$$\alpha\geq \max\Big\{0,\,  n\big({1\over 2}-{1\over p}\big)- {2\over p}\Big\}.
$$
	\end{prop}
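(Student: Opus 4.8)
The plan is to establish the two lower bounds $\alpha \ge 0$ and $\alpha \ge n(1/2 - 1/p) - 2/p$ by exhibiting suitable test functions. For the bound $\alpha \ge 0$, the natural candidate is to take $f = Y_k$ a single spherical harmonic of degree $k$. Then $e^{it\Delta_{\mathbb S^n}}Y_k = e^{-itk(k+n-1)}Y_k$, so the left-hand side of \eqref{e2.8} equals $(2\pi)^{1/p}\|Y_k\|_{L^p(\mathbb S^n)}$, a constant multiple of $\|Y_k\|_p$, which is independent of the time oscillation. Meanwhile $\|Y_k\|_{W^{\alpha,p}} \approx (1+k(k+n-1))^{\alpha/2}\|Y_k\|_p \approx k^\alpha \|Y_k\|_p$. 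Letting $k \to \infty$ forces $\alpha \ge 0$.

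For the more delicate bound $\alpha \ge n(1/2-1/p) - 2/p$, I would follow the strategy indicated in the introduction, adapting ideas from Rogers \cite{Rogers2008} together with the semiclassical dispersion estimate of Burq-G\'erard-Tzvetkov \cite{BGT2004}. The idea is to take $f$ to be a spectrally localized function concentrated near a point $x_0 \in \mathbb S^n$ at frequency scale $\lambda = 2^j$: say $f_\lambda = \sum_{k \sim \lambda} \mathbb P^n_k g$ for an appropriate $g$, normalized so that $f_\lambda$ behaves like a bump of spatial width $\lambda^{-1}$ (so $\|f_\lambda\|_{L^p(\mathbb S^n)} \approx \lambda^{-n/p}$ while $\|f_\lambda\|_{L^2}\approx \lambda^{-n/2}$) and $\|f_\lambda\|_{W^{\alpha,p}} \approx \lambda^\alpha \lambda^{-n/p}$. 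On the sphere the flow $e^{it\Delta_{\mathbb S^n}}$ has an approximate recurrence/focusing structure: near times $t$ that are small rational multiples of $2\pi$ (in particular near $t = 0$ on a time interval of length $\sim \lambda^{-1}$, and also near $t = \pi$ where the wave refocuses at the antipode), the dispersion estimate of \cite{BGT2004} shows that $|e^{it\Delta_{\mathbb S^n}}f_\lambda(x)| \gtrsim \lambda^{-n/p}\cdot(\text{no decay})$ on a set of measure $\gtrsim \lambda^{-n}$ in space and $\gtrsim \lambda^{-1}$ in time; moreover, by summing the contributions over the $\sim \lambda$ many rational times $2\pi a/q$ with $q \lesssim \lambda^{1/2}$ where partial focusing occurs, one gains an extra factor. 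Carefully bookkeeping these contributions gives a lower bound for $\|e^{it\Delta_{\mathbb S^n}}f_\lambda\|_{L^p(\mathbb T\times\mathbb S^n)}$ of order $\lambda^{-n/p}\cdot\lambda^{2/p}\cdot\lambda^{\text{(something nonnegative)}}$, and comparing with $\lambda^\alpha\lambda^{-n/p}$ after letting $\lambda\to\infty$ yields $\alpha \ge n(1/2-1/p) - 2/p$.

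Concretely, I would structure the argument as follows. First, fix the normalized spectral bump $f_\lambda$ and record the elementary norm estimates $\|f_\lambda\|_{W^{\alpha,p}} \lesssim \lambda^{\alpha - n/p}$. Second, invoke the semiclassical dispersive bound from \cite{BGT2004}: for $|t| \le c/\lambda$ one has a pointwise lower bound $|e^{it\Delta_{\mathbb S^n}}f_\lambda(x)| \gtrsim 1$ on a $\lambda^{-1}$-ball around $x_0$, giving a contribution $\gtrsim (\lambda^{-1})^{1/p}(\lambda^{-n})^{1/p} = \lambda^{-(n+1)/p}$ to the space-time $L^p$ norm — this already yields the weaker bound but illustrates the mechanism. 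Third, exploit the periodicity: since the eigenvalues $k(k+n-1)$ are integers, $e^{it\Delta_{\mathbb S^n}}$ is $2\pi$-periodic, and near each rational time $2\pi a/q$ with small denominator $q$ the evolution again concentrates (a Gauss-sum / quadratic-recurrence phenomenon analogous to the $\mathbb T^n$ case). Summing over $q \lesssim \sqrt\lambda$ and the $\sim q$ admissible numerators, each such window of length $\sim (q^2\lambda)^{-1}$ contributing amplitude $\sim q^{-n/?}$ on a ball of appropriate size, and optimizing, produces the sharp exponent $2/p$ gain. Finally, compare and send $\lambda \to \infty$.

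The main obstacle I anticipate is the third step: accurately quantifying the partial focusing of $e^{it\Delta_{\mathbb S^n}}f_\lambda$ near rational times $2\pi a/q$ on the sphere. On the flat torus this is the classical Gauss-sum computation, but on $\mathbb S^n$ the eigenvalues are $k(k+n-1)$ rather than exactly $k^2$, so one must either absorb the linear term $\,(n-1)k$ by modulation (it contributes a pure phase $e^{-it(n-1)k}$, harmless when $t$ is a rational multiple of $2\pi$ times something controlling $q$) or work directly with the semiclassical parametrix of \cite{BGT2004}, which describes the propagator as a sum of Lagrangian distributions whose caustics occur precisely at these rational times. I expect the cleanest route is to use the $\mathbb S^n$ dispersion estimate of \cite{BGT2004} as a black box to get the behavior on a single window near $t=0$, then transfer to the windows near $2\pi a/q$ via the identity $e^{i\frac{2\pi a}{q}\Delta_{\mathbb S^n}} = \sum_{r} \omega_r \, M_r$ where $M_r$ are modulation/rotation-type operators and $\omega_r$ are $q$-dependent Gauss-sum weights, so that the $q$-fold gain comes out of counting the $r$'s — exactly the ``number-theoretic'' mechanism already used (in the other direction) in Lemma~\ref{le2.1}. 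Getting the measure of the bad set and the amplitude bookkeeping consistent across all scales $q \le \sqrt\lambda$ is where the real work lies; everything else is routine.
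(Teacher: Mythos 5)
Your argument for $\alpha\ge 0$ (a single spherical harmonic, letting $k\to\infty$) is the same as the paper's and is fine. The gap is in the main bound $\alpha\ge n(1/2-1/p)-2/p$: the scheme you outline --- concentrated bump data $f_\lambda$ of width $\lambda^{-1}$ plus a sum of partial Talbot-type refocusings at rational times $2\pi a/q$, $q\lesssim\sqrt\lambda$ --- cannot produce this exponent, and it also contains a scale error. First, the stability window around a refocusing time is $\lambda^{-2}$ (the semiclassical time over which the phases $tk(k+n-1)$, $k\lesssim\lambda$, remain coherent), not $\lambda^{-1}$. Second, and more seriously, the bookkeeping does not close: with $\|f_\lambda\|_{W^{\alpha,p}}\approx\lambda^{\alpha-n/p}$, the window near $t=0$ contributes only $\lambda^{-(n+2)/p}$ to the space-time norm (yielding the trivial condition $\alpha\ge -2/p$), while near a rational time $2\pi a/q$ the solution is roughly $q^{n}$ translated copies of the bump with amplitude $q^{-n/2}$, so that window contributes about $\lambda^{-(n+2)}q^{\,n+1-np/2}$ to the $p$-th power of the norm; for $p>2(n+1)/n$ the sum over $q\le\sqrt\lambda$ is dominated by $q=1$, and even in the most favorable regime the total falls far short of the required size $\lambda^{n/2-(n+2)/p}$. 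The structural reason is that bump data has too large a $W^{\alpha,p}$ norm relative to anything the mass-conserving flow can subsequently produce, so no amount of counting rational times rescues the example.

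The paper's (and Rogers's) mechanism is the reverse of yours and needs no rational-time summation: the data is taken to be the \emph{dispersed} state, $f(y)=(1-\Delta_{\mathbb S^n})^{-\alpha/2}e^{-ih\Delta_{\mathbb S^n}}\varphi(h\sqrt{-\Delta_{\mathbb S^n}})(x,y)$, and the Burq--G\'erard--Tzvetkov semiclassical estimate is used as an \emph{upper} bound on the data norm, $\|(1-\Delta_{\mathbb S^n})^{\alpha/2}f\|_{L^p(\mathbb S^n)}\le \|e^{-ih\Delta_{\mathbb S^n}}\varphi(h\sqrt{-\Delta_{\mathbb S^n}})\|_{L^\infty(\mathbb S^n)}\lesssim h^{-n/2}$, uniformly in $p$ --- not as a lower bound on the solution near $t=0$ as in your second step. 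Then for $|t-h|\le ch^2$ the evolution undoes the factor $e^{-ih\Delta_{\mathbb S^n}}$ and refocuses, giving $\Re\big(e^{it\Delta_{\mathbb S^n}}f\big)\gtrsim h^{-(n-\alpha)}$ on the ball $d(x,y)\le ch$, hence $\|e^{it\Delta_{\mathbb S^n}}f\|_{L^p(\mathbb T\times\mathbb S^n)}^p\gtrsim h^{2+n-(n-\alpha)p}$; comparing the two bounds as $h\to0$ yields exactly $\alpha\ge n(1/2-1/p)-2/p$. A single focusing event suffices once the data norm is measured in the dispersed state; that inversion of roles (disperse the data, focus the solution) is the idea missing from your proposal.
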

	
	\begin{proof}
The necessity of $\alpha\ge 0$ can be easily seen by taking $f=Z^n_k, k\rightarrow\infty$ and noting that
$\|e^{it  \Delta_{\mathbb S^n}}f \|_{L^p(\mathbb S^n)}=\|f \|_{L^p(\mathbb S^n)}$ in this case.
The necessity of   $\alpha\ge n\big({1/2}-{1/ p}\big)- {2/p}$ will be shown below using a semiclassical
 dispersion estimate of Burq-G\'{e}rard-Tzvetkov \cite{BGT2004} and ideas from Rogers \cite[Section~2]{Rogers2008}.
More precisely, Lemma 2.5 (with $M=\mathbb S^n$) of \cite{BGT2004} implies that there exists a bump
function $0\le\varphi\in C_c^\infty(\mathbb R)$, such that for all sufficiently small $h>0$,
$$
\big\|e^{-ih  \Delta_{\mathbb S^n}}\varphi(h\sqrt{-  \Delta_{\mathbb S^n}})\big\|_{L^\infty(\mathbb S^n)}\leq C {h^{-n/2}}.
$$
Fix $x\in\mathbb S^n$, and let
$$
f(y)=(1-  \Delta_{\mathbb S^n})^{-\alpha/2} e^{-ih  \Delta_{\mathbb S^n}}\varphi(h\sqrt{-  \Delta_{\mathbb S^n}})(x,y).
$$
Then
\begin{align}
\|(1-  \Delta_{\mathbb S^n})^{\alpha/2}f\|_{L^p(\mathbb S^n)}
&=\big\|e^{-ih  \Delta_{\mathbb S^n}}\varphi(h\sqrt{-  \Delta_{\mathbb S^n}})\big\|_{L^p(\mathbb S^n)}\notag\\
&\leq C  \big\|e^{-ih  \Delta_{\mathbb S^n}}\varphi(h\sqrt{-  \Delta_{\mathbb S^n}})\big\|_{L^\infty(\mathbb S^n)}\notag\\
&\leq C h^{-n/2}.\label{eq:upper-bound}
\end{align}
On the other hand, we have
\begin{align*}
e^{it  \Delta_{\mathbb S^n}}f(y)
&=e^{i(t-h)  \Delta_{\mathbb S^n}}{(1-  \Delta_{\mathbb S^n})^{-\alpha/2}}{\varphi(h\sqrt{-  \Delta_{\mathbb S^n}})}(x,y)\\
&=\sum_{k\ge 0} e^{-i(t-h)k(k+n-1)}
\frac{\varphi\big(h\sqrt{k(k+n-1)}\big)}{(1+k(k+n-1))^{\alpha/2}} Z^n_k(x,y).
\end{align*}
Due to the support property of $\varphi$, the sum above is over the $k$'s with
$h \sqrt{k(k+n-1)}\lesssim 1$.
 Consequently, we can find a small constant $c>0$ so that
$$|t-h|\le c h^2  \Longrightarrow \Re\big(e^{-i(t-h)k(k+n-1)}\big) \ge 1/2.$$
With a possibly smaller $c$, we also have
$$
d(x,y)\le c k^{-1} \Longrightarrow Z^n_k(x,y)\geq c k^{n-1}.
$$
Therefore, when $|t-h|\le c h^2$ and $d(x,y)\le c h$ (change $c$ again if necessary),
$$\Re \big(e^{it  \Delta_{\mathbb S^n}}f\big)\geq c  h^{-(n-\alpha)}.$$
It follows that
\begin{equation}
\label{eq:lower-bound}
\| e^{it  \Delta_{\mathbb S^n}}f \|_{L^p(\mathbb T\times \mathbb S^n)}^p
\geq c h^{2}h^{n}h^{-(n-\alpha)p}.
\end{equation}
Combining \eqref{eq:upper-bound} and \eqref{eq:lower-bound}, we see that for \eqref{e1.8} to hold, we must have
$$\frac{n+2}{p}-(n-\alpha)\ge -\frac n2,$$
that is, $\alpha\ge n\big({1/ 2}-{1/ p}\big)- {2/ p}$.
\end{proof}

By the
 Sobolev emmbedding (see for example, \cite[(9), p.315]{Tr}),  we have
 $$
 L^{\infty}(\mathbb T, W^{{n\over 2}+\varepsilon, 2} (\mathbb S^n)) 
 \hookrightarrow L^{\infty}(\mathbb T \times \mathbb S^n), \ \ \ \forall \varepsilon>0.
 $$
To show Corollary~\ref{cor1.2}, we need  the following result.

\begin{lemma}
\label{le2.3}
Let $n\ge1$ and let  $  \Delta_{\mathbb S^n}$ be the Laplace-Beltrami operator on  $\mathbb S^n$.
 For every  $y\in{\mathbb R}$ and $\varepsilon>0$, there exists a constant $C=C(\varepsilon, n)$ independent of $y$
 such that
\begin{align}
\label{e2.8}
\| e^{it  \Delta_{\mathbb S^n}}f \|_{L^\infty(\mathbb T \times \mathbb S^n)}
\leq C  \|(1-  \Delta_{\mathbb S^n})^{iy+ \frac{n}{4}+\varepsilon}f\|_{L^\infty(\mathbb S^n)}.
\end{align}
\end{lemma}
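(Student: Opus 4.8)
The plan is to combine three elementary ingredients: the Sobolev embedding $W^{s,2}(\mathbb S^n)\hookrightarrow L^\infty(\mathbb S^n)$ for $s>n/2$ --- stated just before this lemma in the form $L^\infty(\mathbb T,W^{n/2+\varepsilon,2}(\mathbb S^n))\hookrightarrow L^\infty(\mathbb T\times\mathbb S^n)$ --- the fact that $e^{it\Delta_{\mathbb S^n}}$ is an isometry on $L^2(\mathbb S^n)$ and commutes with every function of $\Delta_{\mathbb S^n}$ (being itself such a function, by \eqref{e1.2}), and the observation that a purely imaginary power of $1-\Delta_{\mathbb S^n}$ is unitary on $L^2(\mathbb S^n)$. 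The reason the exponent on the right-hand side of the stated inequality carries the spectral parameter $iy$ while the constant stays independent of $y$ is that this estimate is meant to serve as one endpoint of a Stein analytic interpolation family --- together with the $L^4$ bound of Theorem~\ref{Thm1.1} and the $L^2$ isometry --- in the proof of Corollary~\ref{cor1.2}.

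First I would fix $t\in\mathbb T$ and apply the Sobolev embedding with $s=n/2+\varepsilon$ to the function $x\mapsto e^{it\Delta_{\mathbb S^n}}f(x)$, which gives
\[
\|e^{it\Delta_{\mathbb S^n}}f\|_{L^\infty(\mathbb S^n)}\le C\,\|(1-\Delta_{\mathbb S^n})^{n/4+\varepsilon/2}e^{it\Delta_{\mathbb S^n}}f\|_{L^2(\mathbb S^n)}.
\]
Since $(1-\Delta_{\mathbb S^n})^{n/4+\varepsilon/2}$ commutes with $e^{it\Delta_{\mathbb S^n}}$ and the latter is an $L^2$-isometry, the right-hand side equals $C\,\|(1-\Delta_{\mathbb S^n})^{n/4+\varepsilon/2}f\|_{L^2(\mathbb S^n)}$, which is independent of $t$. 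Taking the supremum over $t\in\mathbb T$ then yields
\[
\|e^{it\Delta_{\mathbb S^n}}f\|_{L^\infty(\mathbb T\times\mathbb S^n)}\le C\,\|(1-\Delta_{\mathbb S^n})^{n/4+\varepsilon/2}f\|_{L^2(\mathbb S^n)}.
\]

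The last step trades the $L^2$ norm on the right for an $L^\infty$ norm at the cost of the spectral parameter. Writing $g=(1-\Delta_{\mathbb S^n})^{iy+n/4+\varepsilon}f$, one has $(1-\Delta_{\mathbb S^n})^{n/4+\varepsilon/2}f=(1-\Delta_{\mathbb S^n})^{-iy-\varepsilon/2}g$. By the spectral decomposition $L^2(\mathbb S^n)=\bigoplus_k\mathscr H^n_k$, the operator $(1-\Delta_{\mathbb S^n})^{-iy-\varepsilon/2}$ acts on $\mathscr H^n_k$ as multiplication by $(1+k(k+n-1))^{-iy-\varepsilon/2}$, whose modulus is $(1+k(k+n-1))^{-\varepsilon/2}\le 1$; hence it is bounded on $L^2(\mathbb S^n)$ with operator norm at most $1$, uniformly in $y$. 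Combining this with the trivial bound $\|g\|_{L^2(\mathbb S^n)}\le|\mathbb S^n|^{1/2}\|g\|_{L^\infty(\mathbb S^n)}$ --- valid since $\mathbb S^n$ has finite measure --- gives the claimed estimate with a constant depending only on $\varepsilon$ and $n$.

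There is no serious obstacle in this argument; the only point requiring care is the uniformity in $y$, which is exactly why the imaginary shift $iy$ must sit inside a power of $1-\Delta_{\mathbb S^n}$, where it acts unitarily, rather than anywhere that could produce a factor growing in $y$. One should also remember to use strictly more than $n/2$ derivatives in the Sobolev step: the choice $s=n/2+\varepsilon$ above consumes an extra $\varepsilon/2$ of smoothing and leaves the genuinely negative power $-\varepsilon/2$ available to be absorbed by the $L^2$-boundedness of $(1-\Delta_{\mathbb S^n})^{-iy-\varepsilon/2}$, since $W^{n/2,2}(\mathbb S^n)$ itself does not embed into $L^\infty(\mathbb S^n)$.
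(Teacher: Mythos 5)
Your argument is correct, and every step checks out: the Sobolev embedding $W^{n/2+\varepsilon,2}(\mathbb S^n)\hookrightarrow L^\infty(\mathbb S^n)$ applied at each fixed $t$, the commutation of $(1-\Delta_{\mathbb S^n})^{n/4+\varepsilon/2}$ with $e^{it\Delta_{\mathbb S^n}}$ together with the $L^2$ isometry, the uniform (in $y$) $L^2$ bound for the multiplier $(1+k(k+n-1))^{-iy-\varepsilon/2}$ whose modulus is at most $1$, and finally H\"older on the finite-measure sphere to pass from $\|g\|_{L^2}$ to $\|g\|_{L^\infty}$. The bookkeeping of the $\varepsilon$ is also handled properly: $\varepsilon/2$ is spent in the Sobolev step and the remaining $-\varepsilon/2$ (or even just the purely imaginary part) is absorbed by $L^2$ boundedness, so the constant depends only on $\varepsilon$ and $n$.

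The route differs from the paper's in packaging, though not in substance. The paper proves the lemma by showing directly that $e^{it\Delta_{\mathbb S^n}}(1-\Delta_{\mathbb S^n})^{-iy-n/4-\varepsilon}$ is uniformly bounded on $L^\infty(\mathbb S^n)$: it writes out the zonal kernel $\sum_k (1+k(k+n-1))^{-iy-n/4-\varepsilon}e^{-itk(k+n-1)}Z^n_k(x,\cdot)$, bounds its $L^2_y$ norm using $\|Z^n_k\|_{L^2}^2\lesssim k^{n-1}$ (so the series $\sum_k k^{-1-4\varepsilon}$ converges), and concludes via $L^2\subset L^1$ that the kernel is uniformly integrable, hence the operator is $L^\infty$-bounded uniformly in $t$ and $y$. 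Your proof externalizes exactly this computation to the quoted Sobolev embedding (which on $\mathbb S^n$ is proved by the same Cauchy--Schwarz in the spectral decomposition, using $\dim\mathscr H^n_k\sim k^{n-1}$) and replaces the kernel estimate with the chain $L^\infty_x \le C\,W^{n/2+\varepsilon,2} \to L^2 \to L^\infty$. What your version buys is modularity and transparency about where the uniformity in $y$ comes from (unimodularity of imaginary powers on $L^2$), which is precisely what the interpolation in Corollary~\ref{cor1.2} needs; what the paper's version buys is self-containedness, since it never needs to invoke the embedding theorem beyond the elementary bound on $\|Z^n_k\|_{L^2}$. Either proof is acceptable as written.
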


\begin{proof}
 It suffices to show that the operator
${e^{it  \Delta_{\mathbb S^n}}}{(1-  \Delta_{\mathbb S^n})^{-iy-\frac{n}{4}-\varepsilon}}$
is uniformly bounded on $L^\infty(\mathbb S^n)$.
To show it, let us first recall
some properties of the zonal spherical harmonic functions (see for example \cite{SteinWeiss, Sz}).
Let $C^{\lambda}_k(t)$ be the Gegenbauer polynomial of degree $k$ and index $\lambda,$  i.e,
$$
C^{\lambda}_k(t)={\Gamma(\lambda+{1\over 2}) \over \Gamma(2\lambda)} {\Gamma(k+2\lambda)\over \Gamma(k+\lambda+{1\over 2})}
P_k^{\lambda-{1\over 2}, \lambda-{1\over 2}}(t),
$$
 where $P_k^{\alpha, \beta}$ is the Jacobi polynomial of degree $k$ (see  \cite[p. 80]{Sz}). Denote by $|x-y|\in [0, \pi]$
 the great-circle distance between $x$ and $y$ on $\mathbb S^n$. It is a standard fact that
 $$
{\mathbb P}^n_k f(x) =\int_{\mathbb S^n} Z^n_k(x, y) f(y) dy,
 $$
 where $Z^n_k(x, y)$ is the zonal spherical harmonic function of degree $k$, given by
 $$
 Z^n_k(x,y)={k+\lambda\over \lambda} C_k^{\lambda}(\cos |x-y|).
 $$
 Note that $\cos |x-y|$ represents the inner product in ${\mathbb R}^{n+1}$.

 Denote by $Z^n_k(x,y)$ the zonal spherical harmonic of degree $k$. We can write
$$\frac{e^{it  \Delta_{\mathbb S^n}}}{(1-  \Delta_{\mathbb S^n})^{iy+\frac{n}{4}+\varepsilon}}
=\sum_{k=0}^{\infty} \frac{e^{-itk(k+n-1)}}{\big(1+k(k+n-1)\big)^{iy+\frac{n}{4}+\varepsilon}} Z^n_k(x,y).$$
For any fixed $x$, we have
$$\left\|\frac{e^{it  \Delta_{\mathbb S^n}}}{(1-  \Delta_{\mathbb S^n})^{ iy+\frac{n}{4}+\varepsilon}}\right\|^2_{L^2(\mathbb S^n)}
=\sum_{k=0}^{\infty} \frac{\|Z^n_k\|^2_{L^2(\mathbb S^n)}}{\big(1+k(k+n-1)\big)^{\frac{n}{2}+2\varepsilon}}.
$$
Since $\|Z^n_k\|^2_{L^2(\mathbb S^n)}\leq C k^{n-1}$ (\cite[p.140]{SteinWeiss}),
it follows that the kernel of the operator ${e^{it  \Delta_{\mathbb S^n}}}{(1-  \Delta_{\mathbb S^n})^{-iy-\frac{n}{4}-\varepsilon}}$ is
 uniformly bounded in $L^2(\mathbb S^n)\subset L^1(\mathbb S^n)$, thus defines a uniformly bounded operator on $L^\infty(\mathbb S^n)$.
 \end{proof}

 \begin{rem}
In \cite{T}, M. Taylor studied the Schr\"odinger equation on the spheres at times that are rational multiples of $\pi$ to
  that  for all $1<p<\infty$ and all $s\in {\mathbb R}$. It is shown that
 \begin{eqnarray}\label{bbbbb}
 e^{-\pi i ({m/k}) \Delta_{\mathbb S^n} }:  W^{s, p}(\mathbb S^n) \to W^{s-(n-1)|{1\over 2}-{1\over p}|, p}(\mathbb S^n)
   \end{eqnarray}
   extends to a bounded operator.
   Such estimates also hold in the endpoint cases $p=1, \infty$,
   with $L^1$ replaced by the local Hardy space and $L^{\infty}$ replaced by bmo.
   For the sharpness of this estimate \eqref{bbbbb}, we   refer the reader to Taylor \cite[page 148]{T}.
   \end{rem}

  In the end of this section, we give a proof of   Corollary~\ref{cor1.2}.

   \begin{proof}[Proof of Corollary~\ref{cor1.2}] In view of  Proposition~\ref{prop2.3},
 the necessity part of Corollary~\ref{cor1.2} follows readily.

	For \eqref{e1.9},
let us  first prove it for   $n\geq 3$. The case $4<p<\infty$
    is treated by    interpolating  between  \eqref{e1.8} and the case $p=\infty$.
	 Given any   $\varepsilon>0$,   we consider the analytic family of operators
\begin{eqnarray}\label{e2.222}
T_z= e^{z^2} (1-\Delta_{\mathbb S^n})^{-{  n-2+4\varepsilon +(n+2)z \over 4}} e^{it \Delta_{\mathbb S^n}}, \ \ \ \ \  0 <{\rm Re}\ \! z\leq 1.
\end{eqnarray}
For $z=iy, {\rm Re}\ \! z=0$,   we apply \eqref{e1.8} of Theorem~\ref{Thm1.1} to obtain
that the operators
$$
T_{iy}= e^{-y^2} (1-\Delta_{\mathbb S^n})^{-{  n-2+4\varepsilon +i(n+2)y  \over 4}}  e^{it \Delta_{\mathbb S^n}}
$$
are bounded from $L^4(  \mathbb S^n)$ into $L^4(\mathbb T\times \mathbb S^n)$ and there exists a constant   $C$ independent of $y$ such that
\begin{eqnarray*}
 \|T_{iy} f \|_{L^4(\mathbb T \times \mathbb S^n)}
&=&e^{-y^2} \big\| (1-\Delta_{\mathbb S^n})^{-{   n-2+4\varepsilon\over 4}} e^{it \Delta_{\mathbb S^n}}
[(1-\Delta_{\mathbb S^n})^{-{ i(n+2)y  \over 4}} f]\big\|_{L^4(\mathbb T \times \mathbb S^n)}\nonumber\\
&\leq & Ce^{-y^2}\big\|  (1-\Delta_{\mathbb S^n})^{-{ i(n+2)y\over 4}} f \big\|_{L^4(  \mathbb S^n)}
\nonumber\\
&\leq &
C  \|    f  \|_{L^4(  \mathbb S^n)}
\end{eqnarray*}
since $ \|  (1-\Delta_{\mathbb S^n})^{-{ i(n+2)y/4}}   \big\|_{L^4(  \mathbb S^n)\to L^4(  \mathbb S^n)}
\leq C    (1+|y|)^{(n+2)/2  } $ (see \cite[Theorem 3.1]{DOS}).
On the other hand, we apply Lemma~\ref{le2.3} to get
\begin{eqnarray*}
 \|T_{1+iy} f \|_{L^{\infty}(\mathbb T \times \mathbb S^n)}
&=& e^{1-y^2}\big\|  (1-\Delta_{\mathbb S^n})^{-{ 2n+  4\varepsilon+iy(n+2)  \over 4}} e^{it \Delta_{\mathbb S^n}}
f\big\|_{L^{\infty}(\mathbb T \times \mathbb S^n)} \\
 &\leq&   C  \| f  \|_{L^{\infty}(  \mathbb S^n)}
\end{eqnarray*}
with $C$ independent of $y.$ Then by the complex interpolation theorem (see \cite[Theorem 3.4, pp. 151-152]{CT}),
\begin{eqnarray}\label{ppp}
 \|T_{\theta} f \|_{L^{p}(\mathbb T \times \mathbb S^n)}\leq  C\| f \|_{L^{p}(  \mathbb S^n)}
\end{eqnarray}
where $0\leq \theta \leq 1$ and $1/p=(1-\theta)/4.$ This gives  \eqref{e1.9}  for all $4\leq p\leq \infty$.

Now for   $2\leq p\leq 4$,  we     consider the analytic family of operators
$
T_z= e^{z^2} (1-\Delta_{\mathbb S^n})^{-{  [ (n-2)z +4\varepsilon ]/4}} e^{it \Delta_{\mathbb S^n}},
$ $
0 <{\rm Re}\ \! z\leq 1.
$
   Interpolating between   \eqref{e1.8} of Theorem~\ref{Thm1.1}  and \eqref{e1.4} (with $p=2$) yields
   \eqref{e1.9}  for all $2\leq p\leq 4$ by making  a minor modification to the proof of \eqref{ppp}.
   This  proves \eqref{e1.9} for  the case $n= 3$.

 The proof of  \eqref{e1.9} for  the case $n= 2$  is similar to that of the case $n\geq 3$, and we omit the detail here.
  This completes  the proof of  Corollary~\ref{cor1.2}.
  \end{proof}


 \begin{rem}\label{rem2.7} 
 In   \cite[Theorem 1.1]{CE},  Cardona and Esquivel showed that for $n\geq 2$  
  and $p, q$ satisfying $2\leq p\leq \infty$ and $2\leq q< \infty,$  
the following
  estimate 
\begin{eqnarray}
\label{e2.76}
\big\| e^{it \Delta_{\mathbb S^n}}f \big\|_{L_x^p(\mathbb S^n, L_t^q(\mathbb T))}
\leq C  \|  f \|_{ W^{\alpha, 2}(\mathbb S^n)  } 
\end{eqnarray}
holds for  $\alpha>\aleph ({p, q, n})$, where 
\begin{eqnarray}\label{e2.77} \hspace{1cm}
\aleph ({p, q, n})=
\left\{
\begin{array}{llll}
 {n-1\over 2}\big( {1\over 2}- {1\over p}\big) + \big({1\over 2}-{1\over q}\big), \ \ \ &{\rm if}\ \ \  &2\leq p\leq {2(n+1)\over n-1};\\[6pt]
n\big( {1\over 2}- {1\over p}\big) -{1\over q},\ \ \ &{\rm if}\ \ \  & p> {2(n+1)\over n-1}.
\end{array}
\right.
\end{eqnarray}
The regularity order $\aleph ({p, 2, n})$ is sharp in any dimension $n$, in the sense that \eqref{e2.76} does not hold
 for all $\alpha <\aleph ({p, 2, n}).$

 From \eqref{e1.10} and \eqref{e2.77}, we see that when $n\ge 6$ and $2 < p< 3$, improvement on Corollary \ref{cor1.2} can be obtained 
	from the $W_x^{\alpha,2}\rightarrow L^p_xL^p_t$ estimate as in \eqref{e2.76} above. That is, 	
	 \eqref{e1.9}  holds provided that $n\ge 6$, $ \alpha>\aleph({p, p, n})$  and $2 < p< 3$.
\end{rem}

 \medskip

 \section{Proof of Theorem \ref{Thm1.2}}
 \setcounter{equation}{0}

 To prove \eqref{e1.12} in Theorem~\ref{Thm1.2}, we need
 a slight variant of a Strichartz estimate on $\mathbb T$ due to Bourgain \cite[Proposition~2.36]{Bourgain1993}.

\begin{lemma}For any $\varepsilon>0$, integer $N\ge 1$, and numerical sequence $\boldsymbol{a}=\{a_k\}$, there exists a
 constant $C=C(\varepsilon)$ independent of $N$ and  $\boldsymbol{a}$ such that
\begin{equation}\label{e4.1}
\left\|\sum_{k=0}^{N-1} a_k e^{-itk(k+n-1)}e^{\pm ik\theta}\right\|_{L^6(\mathbb T\times \mathbb T)}
\leq C(\varepsilon) N^{\varepsilon}\|\boldsymbol{a}\|_{\ell^2}.
\end{equation}
\end{lemma}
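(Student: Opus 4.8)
The plan is to deduce this from Bourgain's original $L^6$ Strichartz estimate on $\mathbb{T}^2$ for the free Schr\"odinger equation, namely
\[
\Bigl\|\sum_{|k|<N} b_k e^{-itk^2}e^{ik\theta}\Bigr\|_{L^6(\mathbb{T}\times\mathbb{T})}\le C(\varepsilon)N^{\varepsilon}\|\boldsymbol{b}\|_{\ell^2},
\]
which is exactly \cite[Proposition~2.36]{Bourgain1993} (equivalently, the periodic $L^6$ bound underlying \eqref{e1.b3}). The first step is to remove the shift $k(k+n-1)=k^2+(n-1)k$ from the quadratic phase: write $e^{-itk(k+n-1)}e^{\pm ik\theta}=e^{-itk^2}e^{ik(\pm\theta-(n-1)t)}$. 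The map $(t,\theta)\mapsto(t,\,\pm\theta-(n-1)t)$ is a measure-preserving affine automorphism of $\mathbb{T}\times\mathbb{T}$ (it is the composition of a shear and a sign change, both of Jacobian $1$ and both preserving the $2\pi\mathbb{Z}^2$-periodicity lattice), so the $L^6(\mathbb{T}\times\mathbb{T})$ norm of the left side of \eqref{e4.1} equals that of $\sum_{k=0}^{N-1}a_k e^{-itk^2}e^{ik\theta'}$.

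The second step is to apply Bourgain's estimate to this reduced sum with $b_k=a_k$ for $0\le k\le N-1$ and $b_k=0$ otherwise; since the frequencies $k$ range over $\{0,\dots,N-1\}\subset\{|k|<N\}$ this is immediate, and it yields the bound $C(\varepsilon)N^{\varepsilon}\|\boldsymbol{a}\|_{\ell^2}$. Note the $\pm$ is handled uniformly: the two cases differ only by the sign choice in the affine change of variables, so the same constant works. Strictly speaking one should record that Bourgain's result is usually stated for $e^{it\Delta}$ on $\mathbb{T}^2$ with both frequency variables free, whereas here only one frequency is summed; but one recovers the one-dimensional-frequency version by taking the data on $\mathbb{T}^2$ to be (a limit of) tensor products concentrated in the first frequency variable, or simply by invoking the $1$D periodic $L^6$ estimate in the $\theta$ variable combined with Bourgain's $\Lambda(6)$-type argument — in any case \cite[Proposition~2.36]{Bourgain1993} is stated in precisely the form needed here after the phase normalization.

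The only genuine point requiring care — and the step I would flag as the "obstacle," though it is minor — is bookkeeping the periodicity under the shear $\theta\mapsto\theta-(n-1)t$: since $n-1$ is an integer, this shear maps the torus to itself bijectively and preserves Lebesgue measure, so no boundary terms or loss of periodicity occur; if $n-1$ were irrational one would instead have to integrate over a larger period in $t$, but that issue does not arise. Everything else is a direct citation. I would therefore present the proof as: (i) factor the phase and perform the affine change of variables to reduce to the standard quadratic phase $e^{-itk^2}e^{ik\theta}$; (ii) quote \cite[Proposition~2.36]{Bourgain1993} for the resulting sum over $0\le k<N$; (iii) observe the constant is independent of $N$, of $\boldsymbol{a}$, and of the sign $\pm$.
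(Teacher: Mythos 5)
Your proof is correct, but it takes a different route from the paper. You remove the shift in the phase by writing $e^{-itk(k+n-1)}e^{\pm ik\theta}=e^{-itk^2}e^{ik(\pm\theta-(n-1)t)}$ and then performing the measure-preserving shear $(t,\theta)\mapsto(t,\pm\theta-(n-1)t)$ of $\mathbb{T}\times\mathbb{T}$ (legitimate precisely because $n-1\in\mathbb{Z}$), which reduces the left-hand side of \eqref{e4.1} to the standard sum $\sum_{k=0}^{N-1}a_ke^{-itk^2}e^{ik\theta}$, and you then quote Bourgain's Proposition~2.36 as a black box. The paper instead re-runs Bourgain's proof with the shifted phase: Plancherel in $(t,\theta)$, Cauchy--Schwarz, and the divisor-type bound on the representation count $r^{(m)}_{u,v}$, where the shift is absorbed arithmetically through the identity $r^{(m)}_{u,v}=r^{(0)}_{u,v-mu}$ --- which is really the same shear seen on the frequency side. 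What each buys: your argument is shorter and makes the mechanism transparent (the constant is manifestly independent of $n$, $N$, $\boldsymbol{a}$ and the sign $\pm$), while the paper's version is self-contained modulo the $m=0$ counting estimate and displays explicitly where the $N^{\varepsilon}$ loss originates. One small inaccuracy in your write-up: the caveat about Bourgain's result being stated ``with both frequency variables free'' is misplaced --- \cite[Proposition~2.36]{Bourgain1993} is exactly the one-dimensional periodic Schr\"odinger $L^6$ bound (space-time domain $\mathbb{T}\times\mathbb{T}$, single frequency sum), i.e.\ the estimate underlying \eqref{e1.b3}, so no tensor-product or limiting device is needed; this does not affect the validity of your argument.
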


\begin{proof}
We consider only the case $e^{\pm ik\theta}=e^{- ik\theta}, k=0, \cdots, N-1$. 
The proof for the case $e^{\pm ik\theta}=e^{ ik\theta}$ is similar.
For convenience, we will write $m=n-1$.

Following the proof of \cite[Proposition~2.36]{Bourgain1993}, the $L^6$-norm in \eqref{e4.1}
can be written out using Plancherel's theorem as
\begin{equation}\label{e4.2}
\left\|\sum_{k=0}^{N-1} a_k e^{-itk(k+m)}e^{-ik\theta}\right\|_{L^6(\mathbb T^2)}^6
=c \sum_{u,v}  \left|\sum_{j+k+\ell=u\atop j(j+m)+k(k+m)+\ell(\ell+m)=v} a_j a_k a_\ell  \right|^2,
\end{equation}
where $c$ is an absolute constant. Note that since $0\le j,k,\ell<N$,
in the last sum $u\lesssim N, v\lesssim N^2$. Denote
$$
r_{u,v}^{(m)}=\#\{(j,k,\ell)\in\mathbb N^3:j+k+\ell=u,\ j(j+m)+k(k+m)+\ell(\ell+m)=v\}.
$$
By the Cauchy-Schwarz inequality, \eqref{e4.2} can be bounded by
$$
\sum_{u,v}  r_{u,v}^{(m)}
\sum_{j+k+\ell=u\atop j(j+m)+k(k+m)+\ell(\ell+m)=v} |a_j|^2 |a_k|^2 |a_\ell|^2.$$
Since
$$\sum_{u,v}\sum_{j+k+\ell=u\atop j(j+m)+k(k+m)+\ell(\ell+m)=v} |a_j|^2 |a_k|^2 |a_\ell|^2 = \|\boldsymbol{a}\|_{\ell^2}^6,
$$
to prove \eqref{e4.1} it suffices to show
\begin{eqnarray}\label{e4.41}
r_{u,v}^{(m)}\le C(\varepsilon) N^\varepsilon,\ \forall\varepsilon>0.
\end{eqnarray}
When $m=0$, \eqref{e4.41} has been shown to hold in the proof of \cite[Proposition~2.36]{Bourgain1993}. On the other hand, by definition,
 $r_{u,v}^{(m)}=r_{u,v-mu}^{(0)}$. It follows immediately that \eqref{e4.41} also holds for $m\ge 1$. This completes the proof of the lemma.
\end{proof}

By a standard argument (see Moyua-Vega \cite[Proposition~1]{MoyuaVega2008}), \eqref{e4.1} implies the following maximal inequality.\\

\begin{cor}\label{cor4.2}
  For any $\varepsilon>0$, integer $N\ge 1$, and numerical sequence $\boldsymbol{a}=\{a_k\}$,
  there exists a constant $C(\varepsilon) $ such that
\begin{equation}\label{e4.3}
\left\|\sup_{t\in\mathbb T}\Big|\sum_{k=0}^{N-1} a_k e^{-itk(k+n-1)}e^{\pm ik\theta}\Big|\right\|_{L^6(\mathbb T)}
\leq C(\varepsilon) N^{\frac13+\varepsilon}\|\boldsymbol{a}\|_{\ell^2}.
\end{equation}
\end{cor}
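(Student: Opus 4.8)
The plan is to deduce the maximal inequality \eqref{e4.3} from the space-time estimate \eqref{e4.1} by the standard Sobolev-in-time argument used by Moyua and Vega \cite[Proposition~1]{MoyuaVega2008}. Write $F(t,\theta)=\sum_{k=0}^{N-1} a_k e^{-itk(k+n-1)}e^{\pm ik\theta}$. Since $t\mapsto F(t,\theta)$ is a trigonometric polynomial in $t$ with frequencies $k(k+n-1)$ all of size $O(N^2)$, one has the one-dimensional Sobolev / Bernstein-type bound $\|G\|_{L^\infty(\mathbb T)}\le C\,\| G\|_{L^6(\mathbb T)}^{1-1/s}\,\|G'\|_{L^6(\mathbb T)}^{1/s}$-type control; more precisely the cleanest route is $\sup_{t}|G(t)|^6 \le \|G\|_{L^6(\mathbb T)}^6 + 6\|G\|_{L^6(\mathbb T)}^5\|G'\|_{L^6(\mathbb T)}$ via the fundamental theorem of calculus applied to $|G|^6$, so that after integrating in $\theta$ and applying H\"older in $\theta$ one gets
\begin{equation*}
\Big\|\sup_{t\in\mathbb T}|F(t,\cdot)|\Big\|_{L^6(\mathbb T)}^6
\le \|F\|_{L^6(\mathbb T\times\mathbb T)}^6 + 6\,\|F\|_{L^6(\mathbb T\times\mathbb T)}^5\,\|\partial_t F\|_{L^6(\mathbb T\times\mathbb T)}.
\end{equation*}

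Next I would estimate each factor using \eqref{e4.1}. The first term is bounded by $C(\varepsilon)N^{\varepsilon}\|\boldsymbol a\|_{\ell^2}$ directly. For $\partial_t F$, note that $\partial_t F = \sum_{k=0}^{N-1} \big(-ik(k+n-1)a_k\big) e^{-itk(k+n-1)}e^{\pm ik\theta}$, which is of the same form with coefficients $a_k$ replaced by $-ik(k+n-1)a_k$; since $|k(k+n-1)|\le C N^2$ for $0\le k<N$, applying \eqref{e4.1} again gives $\|\partial_t F\|_{L^6(\mathbb T\times\mathbb T)}\le C(\varepsilon)N^{\varepsilon}\,\|\{k(k+n-1)a_k\}\|_{\ell^2}\le C(\varepsilon)N^{2+\varepsilon}\|\boldsymbol a\|_{\ell^2}$. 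Substituting both bounds into the displayed inequality yields
\begin{equation*}
\Big\|\sup_{t\in\mathbb T}|F(t,\cdot)|\Big\|_{L^6(\mathbb T)}^6
\le C(\varepsilon)\,N^{6\varepsilon}\|\boldsymbol a\|_{\ell^2}^6 + C(\varepsilon)\,N^{5\varepsilon}\|\boldsymbol a\|_{\ell^2}^5\cdot N^{2+\varepsilon}\|\boldsymbol a\|_{\ell^2}
= C(\varepsilon)\,N^{2+6\varepsilon}\|\boldsymbol a\|_{\ell^2}^6,
\end{equation*}
and taking sixth roots gives \eqref{e4.3} with $\tfrac13+\varepsilon$ in place of $\tfrac13 + 6\varepsilon$, which is the claimed bound after relabelling $\varepsilon$.

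I do not expect a serious obstacle here, since everything reduces to the already-proved \eqref{e4.1} plus a one-variable Sobolev embedding; the only points requiring a little care are (i) justifying the pointwise-in-$t$ bound for $|G|^6$ uniformly in the $\theta$-variable and interchanging $\sup_t$ with the $L^6_\theta$ norm (both fine since $F$ is a polynomial, hence smooth, and $\mathbb T$ is compact), and (ii) tracking the power of $N$ so that the $N^2$ loss from the time derivative, together with the sixth power, produces exactly the gain $N^{1/3}$ after the sixth root — the ``main'' bookkeeping step. One may alternatively phrase the time-Sobolev step as $\|G\|_{L^\infty(\mathbb T)}\le C\|(1+\partial_t^2)^{s/2}G\|_{L^6(\mathbb T)}$ for $s>1/6$ combined with the fact that $(1+\partial_t^2)^{s/2}$ acting on frequencies of size $\lesssim N^2$ costs at most $N^{2s}$; choosing $s$ just above $1/6$ reproduces the same exponent $N^{1/3+\varepsilon}$. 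Either formulation gives Corollary~\ref{cor4.2}.
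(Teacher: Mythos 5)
Your argument is correct and is exactly the ``standard argument'' the paper invokes by citing Moyua--Vega: a Sobolev/Bernstein-type embedding in the time variable (equivalently your fundamental-theorem-of-calculus bound for $|G|^6$), exploiting that the time frequencies $k(k+n-1)$ are $O(N^2)$ so the $t$-derivative (or the operator $(1+\partial_t^2)^{s/2}$ with $s>1/6$) costs $N^{2+\varepsilon}$ (resp.\ $N^{1/3+\varepsilon}$), combined with two applications of \eqref{e4.1}. The bookkeeping $N^{2/6}=N^{1/3}$ is right, so no changes are needed.
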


\begin{proof}[Proof of   Theorem~\ref{Thm1.2}]
First, let us show  \eqref{e1.12} in Theorem~\ref{Thm1.2} by using \eqref{e4.3}
 to deduce a maximal inequality for zonal functions on $\mathbb S^n$.
Suppose $f(y)$ is zonal with respect to $x_0\in\mathbb S^n$. Let
$$\widetilde Z_k(y)=\frac{Z_k(x_0,y)}{\|Z_k(x_0,\cdot)\|_2}.$$
By the assumption on $f$, we can write
$$f(y)=\sum_{k=0}^{2N-1} a_k \widetilde Z_k(y)$$
for some dyadic $N\ge1$ and coefficients $a_k\in\mathbb C$.
By considering a dyadic decomposition, we may (and will) further assume that
$$f(y)=\sum_{k=N}^{2N-1} a_k \widetilde Z_k(y).$$
Write $q=\frac{6n}{3n-2}$ and
$$\langle x_0,y \rangle_{\mathbb R^{n+1}}=\cos\theta,\ 0\le\theta\le\pi.$$
Since $Mf:=\sup_{0\le t<2\pi} |e^{it\Delta}f|$ is also zonal with respect to $x_0$, we can write
$$\|Mf\|_{L^q(\mathbb S^n)}
\approx\left(\int_0^\pi |Mf(\theta)|^q (\sin\theta)^{n-1} d\theta\right)^{1/q}.$$
By the Sobolev embedding, we have
$$\|Mf\|_{L^\infty}\le C N^{\frac n2} \|\boldsymbol{a}\|_{\ell^2}.$$
Therefore, for any fixed $c>0$,
\begin{eqnarray}\label{e4.4}
\left(\int_0^{cN^{-1}}+\int_{\pi-cN^{-1}}^{\pi}\right)
|Mf(\theta)|^q (\sin\theta)^{n-1} d\theta
\le C N^{\frac q3} \|\boldsymbol{a}\|_{\ell^2}^q.
\end{eqnarray}

In the region $cN^{-1}\le\theta\le\pi-cN^{-1}$, by Theorem 8.21.13 of Szeg\H{o} \cite{Sz}, we have the uniform estimate
\begin{equation}\label{e4.5}
\widetilde Z_k(\theta)
= \frac{c_k}{(\sin\theta)^{\frac{n-1}{2}}} \cos\left(\Big(k+\frac{n-1}{2}\Big) \theta-\frac{n-1}{4}\pi\right)
+ \frac{O(1)}{k(\sin\theta)^{\frac{n+1}{2}}},
\end{equation}
where $c_k>0$ is a constant bounded above and below.
Correspondingly, we can bound
$$
\sup_{0\leq t< 2\pi} |e^{it\Delta}f(\theta)|\le M^{(0)}f(\theta)+M^{(1)}f(\theta),
$$
where $M^{(0)}f(\theta)$ is given by
\begin{align*}
\frac{1}{(\sin\theta)^{\frac{n-1}{2}}}\sup_{0\le t<2\pi}\left|\sum_{k=N}^{2N-1} c_k a_k e^{-itk(k+n-1)}
\cos\left(\Big(k+\frac{n-1}{2}\Big) \theta-\frac{n-1}{4}\pi\right)\right|
\end{align*}
and, after applying the Cauchy-Schwarz inequality,
\begin{equation}\label{e4.6}
M^{(1)}f(\theta)\le C \frac{\|\boldsymbol{a}\|_{\ell^2}}{\sqrt{N}(\sin\theta)^{\frac{n+1}{2}}}.
\end{equation}
From \eqref{e4.6} it follows immediately that
\begin{equation}\label{e4.7}
\int_{cN^{-1}}^{\pi-cN^{-1}} |M^{(1)}f(\theta)|^q(\sin\theta)^{n-1} d\theta
\le C N^{\frac q3}\|\boldsymbol{a}\|_{\ell^2}^q.
\end{equation}
On the other hand, writing $\cos x=\frac12 \sum_{\pm} e^{\pm ix}$, we can bound
$$
M^{(0)}f(\theta)\le \frac{1}{2(\sin\theta)^{\frac{n-1}{2}}} \sum_{\pm} \sup_{0\le t<2\pi}
\left|\sum_{k=N}^{2N-1} c_k a_k e^{-itk(k+n-1)} e^{\pm i k\theta}\right|.
$$
Thus, by H\"older's inequality, we have
\begin{align*}
&\int_{cN^{-1}}^{\pi-cN^{-1}} |M^{(0)}f(\theta)|^q (\sin\theta)^{n-1} d\theta \\
\le C &
(1+\log N)^{1-\frac{q}{6}}
\left\|\sum_{\pm} \sup_{0\le t<2\pi}\Big|\sum_{k=N}^{2N-1} c_k a_k e^{-itk(k+n-1)} e^{\pm i k\theta}\Big|\right\|_{L^6(\mathbb T)}^q.
\end{align*}
Applying \eqref{e4.3} to the right-hand side, we see that, for any $\varepsilon>0$,
\begin{equation}\label{e4.8}
\int_{cN^{-1}}^{\pi-cN^{-1}} |M^{(0)}f(\theta)|^q (\sin\theta)^{n-1} d\theta
\le C(\varepsilon) N^{\frac q3+\varepsilon} \|\boldsymbol{a}\|_{\ell^2}^q.
\end{equation}

Combining \eqref{e4.4}, \eqref{e4.7} and \eqref{e4.8}, we obtain
$$
\left(\int_0^\pi |Mf(\theta)|^q (\sin\theta)^{n-1} d\theta\right)^{1/q}
\le C(\varepsilon) N^{\frac13+\varepsilon} \|\boldsymbol{a}\|_{\ell^2},
$$
which completes the proof since
$$
N^{\frac13+\varepsilon} \|\boldsymbol{a}\|_{\ell^2}
\approx\|f\|_{H^{\frac13+\varepsilon}(\mathbb S^n)}.
$$
This completes the proof of the sufficiency  part of Theorem~\ref{Thm1.2}.

\smallskip

Next we prove the   necessity  part  of Theorem \ref{Thm1.2}.
The unboundedness will be shown using zonal polynomials. The proof is based on a counterexample
on $\mathbb S^1$ due to Moyua and Vega \cite{MoyuaVega2008}.
Fix $x_0\in\mathbb S^n$. Let $N\ge1$ be a large integer and consider
$$f_N(y)=\sum_{k=0}^{N-1} \frac{\widetilde Z_k(y)}{c_k},$$
where $\widetilde Z_k(y)$ and $c_k$ are as above (with $c_0:=1$).
Since $\|f_N\|_{H^s(\mathbb S^n)}\lesssim N^{\frac12+s}$, it  suffices to show that
$$Mf_N\ge C N^{\frac34}$$
holds on a set $E_N$ of measure $\mu(E_N)\ge C>0$.

As before, write $\langle x_0,y \rangle_{\mathbb R^{n+1}}=\cos\theta,\ 0\le\theta\le\pi$.
Fix a small $\varepsilon>0$. By \eqref{e4.5}, we have the uniform estimate
$$\frac{\widetilde Z_k(y)}{c_k}
= \frac{1}{(\sin\theta)^{\frac{n-1}{2}}} \cos\left(\Big(k+\frac{n-1}{2}\Big) \theta-\frac{n-1}{4}\pi\right)
+ \frac{O(1)}{k+1},\ \varepsilon\le\theta\le\pi-\varepsilon.$$
Therefore, when $\varepsilon\le\theta\le\pi-\varepsilon$,
$$
e^{it\Delta}f_N (\theta)
= \frac{1}{(\sin\theta)^{\frac{n-1}{2}}} \sum_{k=0}^{N-1} e^{-itk(k+n-1)}\cos\left(k\theta+\phi_n(\theta)\right)
+ O(\log N),$$
where $\phi_n(\theta):=\frac{n-1}{2}\theta-\frac{n-1}{4}\pi.$ Denote the sum above by $S(t,\theta)$, i.e.
$$S(t,\theta)
=\sum_{k=0}^{N-1} e^{-itk(k+n-1)}\cos\left(k\theta+\phi_n(\theta)\right).$$
By writing $\cos x=\frac12 \sum_{\pm} e^{\pm ix}$, we have
\begin{align}
S(t,\theta)
&= \frac12 \sum_{\pm} \left(\sum_{k=0}^{N-1} e^{-itk(k+n-1)}e^{\pm ik\theta} \right) e^{\pm i\phi_n(\theta)} \notag \\
&=: \frac12\sum_{\pm} S_{\pm}(t,\theta) e^{\pm i\phi_n(\theta)}.\label{eq:S_pm}
\end{align}

Suppose $t=\frac{2\pi}{q}$ with $q\approx\sqrt N$ being an odd integer. Then
$$S_+\Big(\frac{2\pi}{q},\theta\Big)=\sum_{k=0}^{N-1} e^{-2\pi i\frac{k(k+n-1)}{q}} e^{ik\theta}.$$
Since $e^{-2\pi i\frac{k(k+n-1)}{q}}$ is $q$-periodic in $k$, we can write
\begin{align*}
S_+\Big(\frac{2\pi}{q},\theta\Big)
&=\left(\sum_{k=0}^{q\big\lfloor\frac N q\big\rfloor-1}
+\sum_{k=q\big\lfloor\frac N q\big\rfloor}^{N-1}\right)
e^{-2\pi i\frac{k(k+n-1)}{q}} e^{ik\theta}\\
&=\left(\sum_{\ell=0}^{\big\lfloor\frac N q\big\rfloor-1} e^{i\ell q\theta}\right)
\left(\sum_{k=0}^{q-1} e^{-2\pi i\frac{k(k+n-1)}{q}} e^{ik\theta}\right)
+O(q)\\
&=:\left(\sum_{\ell=0}^{\big\lfloor\frac N q\big\rfloor-1} e^{i\ell q\theta}\right)
s_+\Big(\frac{2\pi}{q},\theta\Big)+O(q).
\end{align*}
If $\theta=\frac{2\pi p}{q}$ for some integer $p$, then
\begin{align*}
s_+\Big(\frac{2\pi}{q},\frac{2\pi p}{q}\Big)
&=\sum_{k=0}^{q-1} e^{-2\pi i\frac{k(k+n-1)}{q}} e^{2\pi i\frac{kp}{q}}\\
&=\sum_{k=0}^{q-1} e^{-2\pi i\frac{k^2+(n-1-p)k}{q}}.
\end{align*}
The last sum is a Gauss sum and can be evaluated explicitly to give
$$s_+\Big(\frac{2\pi}{q},\frac{2\pi p}{q}\Big) = \omega_q \sqrt q\, e^{2\pi i \frac{r(n-1-p)^2}{q}}, $$
where $r$ is an integer such that $4r\equiv 1$ (mod $q$), and
$$\omega_q=
\begin{cases}
     1& \text{if } q\equiv 1 \text{ (mod 4)},\\
     -i& \text{if } q\equiv 3 \text{ (mod 4)}.
\end{cases}$$
If $p$ is even, we can further write
\begin{align*}
s_+\Big(\frac{2\pi}{q},\frac{2\pi p}{q}\Big)
&=\omega_q \sqrt q\, e^{2\pi i \frac{r(n-1)^2+rp^2}{q}}
e^{-2\pi i \frac{2r(n-1)p}{q}}\\
&=\omega_q \sqrt q\, e^{2\pi i \frac{r(n-1)^2+rp^2}{q}}
e^{- i \frac{\pi (n-1) p}{q}}.
\end{align*}

Suppose $\theta=\frac{2\pi p}{q}+\eta$ with $p$ even and $|\eta|\le \frac{\pi}{8N}$. Then
\begin{align*}
\left|s_+\Big(\frac{2\pi}{q},\theta\Big)-s_+\Big(\frac{2\pi}{q},\frac{2\pi p}{q}\Big) \right|
&=\left| \sum_{k=0}^{q-1} e^{-2\pi i\frac{k(k+n-1)}{q}} e^{ik\frac{2\pi p}{q}} (e^{ik\eta}-1) \right|\\
&\le \sum_{k=0}^{q-1} | e^{ik\eta}-1 |
 \le \sum_{k=0}^{q-1} k |\eta|
 \le \frac{q^2}{N}.
\end{align*}
Thus,
$$s_+\Big(\frac{2\pi}{q},\theta\Big)
=s_+\Big(\frac{2\pi}{q},\frac{2\pi p}{q}\Big)+O\left(\frac{q^2}{N}\right).$$
Combining this with
$$e^{i\phi_n(\theta)}
=e^{i\phi_n(\frac{2\pi p}{q})}
+O\left(\frac1N\right),$$
we see that
\begin{align*}
&\hspace{-0.5cm}S_+\Big(\frac{2\pi}{q},\theta\Big)\, e^{i\phi_n(\theta)}\\
=&\left(\sum_{\ell=0}^{\big\lfloor\frac N q\big\rfloor-1} e^{i\ell q\theta}\right)
s_+\Big(\frac{2\pi}{q},\frac{2\pi p}{q}\Big) e^{i\phi_n(\frac{2\pi p}{q})} +O(q)\\
=&\left(\sum_{\ell=0}^{\big\lfloor\frac N q\big\rfloor-1} e^{i\ell q\eta}\right)
\left(\omega_q \sqrt q\, e^{2\pi i \frac{r(n-1)^2+rp^2}{q}}
e^{- i \frac{\pi (n-1) p}{q}}\right)
e^{i\frac{\pi(n-1)p}{q}}e^{-i\frac{n-1}{4}\pi}
+O(q)\\
=&\omega_q \sqrt q\, e^{2\pi i \frac{r(n-1)^2+rp^2}{q}}
\left(\sum_{\ell=0}^{\big\lfloor\frac N q\big\rfloor-1} e^{i\ell q\eta}\right)
e^{-i\frac{n-1}{4}\pi}+O(q).
\end{align*}
A similar argument shows that, for the same $\theta$,
$$S_-\Big(\frac{2\pi}{q},\theta\Big)\, e^{-i\phi_n(\theta)}
=\omega_q \sqrt q\, e^{2\pi i \frac{r(n-1)^2+rp^2}{q}}
\left(\sum_{\ell=0}^{\big\lfloor\frac N q\big\rfloor-1} e^{-i\ell q\eta}\right)
e^{i\frac{n-1}{4}\pi}+O(q).$$
Thus, by \eqref{eq:S_pm},
\begin{align*}
\left|S\Big(\frac{2\pi}{q},\theta\Big)\right|=\sqrt q
\left|\sum_{\ell=0}^{\big\lfloor\frac N q\big\rfloor-1} \cos\left(\frac{n-1}{4}\pi-\ell q\eta\right)\right|
+O(\sqrt N).
\end{align*}
Since $|\ell q\eta|\le \frac\pi 8$, the cosine's in the sum are of the same sign and satisfy
\begin{align*}
\Big|\cos\left(\frac{n-1}{4}\pi-\ell q\eta\right)\Big|
\ge C
\begin{cases}
     \ell q |\eta|&  \text{if } n\equiv 3 \text{ or } 7 \text{ (mod 8)} , \\
     1&  \text{otherwise.}
\end{cases}
\end{align*}
It follows that if $|\eta|\ge \frac{\pi}{16 N}$, then
$$\left|S\Big(\frac{2\pi}{q},\theta\Big)\right|\ge C \sqrt q\cdot\frac{N}{q}\approx N^{\frac34},$$
and, consequently, $Mf_N(\theta)\ge C N^{\frac34}$.

Now consider the set
$$E_N =\bigcup_{q \text{ odd: } \sqrt N\le q\le 2\sqrt N
\atop p \text{ even: } 2\varepsilon<\frac{2\pi p}{q}<\pi -2\varepsilon}
\left(\frac{2\pi p}{q}+\frac{\pi}{16N},\frac{2\pi p}{q}+\frac{\pi}{8N}\right).
$$
By the argument above,
$$Mf_N(\theta)\ge C N^{\frac34},\ \forall \theta\in E_N.$$
Since any two intervals in the definition of $E_N$ are either disjoint or identical,
by a counting argument (see \cite{MoyuaVega2008}), it follows that $|E_N|\ge C$.
This proves the proof of the   necessity  part  of Theorem \ref{Thm1.2}.
\end{proof}

 \noindent
{\bf Acknowledgments.}
X. Chen was supported by the NNSF of China, Grant Nos. 11901593 and  12071490.
 X.T. Duong was supported by  the Australian Research Council (ARC) through the research
grant DP190100970.
S. Lee  was supported by NRF (Republic of Korea) grant No. NRF2018R1A2B2006298.
 L. Yan was supported by the NNSF of China, Grant
No. 11521101 and 11871480, and by the Australian Research Council (ARC) through the research
grant DP190100970.

 \end{document}